\documentclass[a4paper, reqno]{amsart}
\usepackage{amsmath}
\usepackage{amscd}
\usepackage{amsfonts}
\usepackage{enumerate}
\usepackage{comment}

\usepackage[all]{xy}

\newtheorem{thm}{Theorem}[section]
\newtheorem{lem}[thm]{Lemma}

\newtheorem{cor}[thm]{Corollary}

\newtheorem{defn}[thm]{Definition}
\newtheorem{ntn}[thm]{Notation}

\numberwithin{equation}{section}

\newcommand{\BC}{\mathbb{C}}
\newcommand{\BN}{\mathbb{N}}
\newcommand{\BR}{\mathbb{R}}

\newcommand{\BM}{\mathbb{M}}

\newcommand{\CL}{\mathcal{L}}

\newcommand{\KI}{\mathfrak{I}}
\newcommand{\KH}{\mathfrak{H}}

\newcommand{\CE}{\mathcal{E}}

\newcommand{\CB}{\mathrm{CB}}

\newcommand{\os}{\mathrm{os}}
\newcommand{\rd}{\mathrm{d}}

\newcommand{\Morc}{\mathrm{CCP}}
\newcommand{\sa}{\mathrm{sa}}

\newcommand{\M}{\mathrm{A}}
\newcommand{\N}{\mathrm{N}}

\newcommand{\smnoind}{\noindent}

\begin{document}
	\title{Predual and tight operator systems}
	
	\author{Chi-Keung Ng and Xin-Rui Tan}
	
	\address[Chi-Keung Ng]{Chern Institute of Mathematics and LPMC, Nankai University, Tianjin 300071, China.}
	\email{ckng@nankai.edu.cn}

	\address[Xin-Rui Tan]{Chern Institute of Mathematics, Nankai University, Tianjin 300071, China.}
	\email{1120230017@mail.nankai.edu.cn}

	\date{\today}
	
	\keywords{dual operator systems, preduals}
	\subjclass[2020]{Primary: 46L07, 47L07, 47L25, 47L50}
	
	\begin{abstract}
		Given a (not necessarily unital) complete operator system $S$ with a generating cone, there are some  studies in literature (see, e.g., \cite{JN2, Ng}) on the operator system dual $S^\mathrm{d}$ of $S$, i.e., the dual matrix-ordered space $S^*$ equipped with a matrix norm that turns it into a dual operator system satisfying certain universal property.
		
In order to do further study on $S^\mathrm{d}$, one needs to consider the operator system predual construction. 
		More precisely, given a dual operator system $V$ with a generating cone, there is a matrix norm on the predual space $V_*$, that turns it into a complete operator system $V_\#$ satisfying certain universal property.
		
		In this article, we show that $V\cong T^\mathrm{d}$ for a complete operator system $T$ if and only if $V$ satisfies a natural property called tightness; in this case, $V\cong (V_\#)^\rd$. 
		Furthermore, we establish that if $S$ is tight, then $S\cong W_\#$ for a dual operator system $W$; in fact $S\cong (S^\mathrm{d})_\#$. 

		Since all unital complete operator systems and all $C^*$-algebra are tight, one sees that every unital complete operator system and every $C^*$-algebra is of the form  $W_\#$, for a dual operator system $W$. 
		%Moreover, if $\CE$ is the operator system associated with a tolerance relation and $\tilde \CE$ is its completion  (which is again an operator system), then $\tilde \CE\cong (\tilde \CE^\rd)_\#$. 
	\end{abstract}
	
	\maketitle
	
	\section{Introduction}
	
	%\medskip
	
	The study of duality for operator systems began with the work \cite{CE} of Choi and Effros, in which it was shown that for every finite dimensional unital operator system $S$, one can find an Archimedean matrix order unit in the dual matrix ordered space $S^*$, turning it into a unital operator system. 
%	Some interests on the duality of finite dimensional unital operator systems come from its connection with quantum information theory and quantum graph theory (see, e.g., \cite{DSW} and \cite{GMS}). 

However, when $S$ is an infinite dimensional unital operator system, $S^*$ may not have an order unit any more (see, e.g., \cite[Example 3.3(c)]{Ng1}). 
	Therefore, one needs to go beyond the unital case. 
	
	%\medskip
	
	Werner began the study of not necessarily unital operator systems in \cite{Wern-subsp,Werner} and gave a characterization of them in \cite[Theorem 4.15]{Wern-subsp}. 
	This theory was applied by Han in \cite{Han} to study duality of infinite dimensional operator systems. 
	Moreover, based on \cite{Wern-subsp,Werner},  the first named author introduced and studied in \cite{Ng1} a duality framework for general complete operator systems. 
	In \cite{JN2}, this framework was further studied and was applied to the class of operator systems introduced by Connes and van Suijlekom in \cite{CvD,CvD2} (see also \cite{GS23}). 
	On the other hand, an equivalent form of not necessarily unital operator systems, in terms of noncommutative convex sets, was given recently by Kennedy, Kim and Manor in \cite{KKM}. 
	This work was then applied in \cite{HKM} to study ``duailzability'' of operator systems in the sense of \cite{Ng1} (see also \cite{Han}).

	%\medskip
	More precisely, for a complete operator system $S$ (which is not necessarily unital), a certain ``universal dual operator system envelope'', $S^\rd$, of $S^*$, called the \emph{operator system dual},  was introduced in \cite{Ng1}.
	We say that $S$ 
	\begin{enumerate}
		\item has a \emph{generating cone} if the linear span of the cone $S_+$ equals $S$;
		
		\item is \emph{dualizable} if the canonical map from $S^*$ to $S^\rd$ is an operator space isomorphism.
	\end{enumerate}
	It was shown in \cite{JN2,Ng1} that the construction $S\mapsto S^\rd$ induces a functor from the category of operator systems with generating cones (respectively, dualizable operator systems) to the category of dual operator systems with generating cones(respectively, dualizable dual operator systems), called the \emph{dual functor}. 

In order to study the duality of operator systems via the dual functor, it is necessary to consider its predual counterpart.
The aim of this article is to define and study ``operator system preduals''.

Using the results in the article, it is recently obtained in \cite{JNT} that the dual functor is an equivalence from the category of tight operator systems (see Definition \ref{defn:d-dual}; this includes all unital operator systems and all $C^*$-algebras) to the category of tight dual operator systems.

	%\medskip
	
	The following  are two interesting questions concerning this duality framework:
	\begin{enumerate}[Q1).]
		\item Which dual operator system can be written in the form $S^\rd$? 
		
		\item What is the relation between the dual operator system construction and the predual operator system construction?%\footnote{This question was asked by van Suijlekom during a conversation with the first named author.}  
	\end{enumerate}

	%\medskip
	
	The aim of this article is to give a study of these two questions. 
	We find that these two questions are both related to the notion of ``tightness'' for operator systems as will be defined in  Definition \ref{defn:d-dual}. 
	More precisely, for a dual operator system $V$ and a complete operator system $S$, both with generating cones, 
	\begin{enumerate}[(i).]
		\item if $V_\#$ is the ``universal complete operator system envelope'' of the predual $V_*$ of $V$, then $V_\#$ is always 
		tight (see Theorem \ref{thm:os-predual-tight}(a));
		
		\item $V$ is tight if and only if $V \cong (V_\#)^\rd$, which is equivalent to $V\cong T^\rd$ for a complete operator system $T$ (see Theorem \ref{thm:predual-dual}(b));
		
		\item $S$ is tight if and only if $S\cong (S^\rd)_\#$ (see Theorem \ref{thm:os-predual-tight}(b)).
	\end{enumerate}
	
	%\medskip

Since unital complete operator systems are tight (see \cite[Theorem 3.17(b)]{Ng1}), the last two statements above are applied in Corollary \ref{cor:unital} to the case when $V$ and $S$ are unital. 	
In particular, every unital dual operator system can be written as $T^\rd$ for a complete operator system $T$ with a generating cone (see Corollary \ref{cor:unital}(b)). 
Moreover, Statement (iii) above also applies to the completion of the operator system associated with a tolerance relation (see Corollary \ref{cor:compl-op-sys} and \ref{cor:toler}).

%\medskip

The above indicates certain duality between the construction $V\mapsto V_\#$ and the construction $S\mapsto S^\rd$. 
	The precise functorial connections between them will be studied in a further work (see \cite{JNT}).

	%\medskip

	\section{Preliminaries}\label{sec:prelim}
	
	%\medskip
	
	We first recall some basic notations and facts that are needed in our consideration. 
	In this article, all vector spaces are over the complex field, unless otherwise stated. 
	
	%\medskip
	
	Let $E$ and $F$ be  normed spaces. 
	Denote by $E^*$ the dual (Banach) space of $E$ and by $\kappa_E: E\to E^{**}$ the canonical embedding; i.e. 
	\begin{equation}\label{eqt:kappa}
		\kappa_E(x)(f):= f(x) \qquad (x\in E; f\in E^*).
	\end{equation}
	It is clear that for a bounded linear map $\phi:E\to F$, one has 
	\begin{equation}\label{eqt:kappa}
		\phi^{**}\circ \kappa_E = \kappa_F\circ \phi.
	\end{equation}
	If $E$ is a partially ordered $^*$-vector space, in the sense of \cite{CE}, then we say that $E$ \emph{has a generating cone} if $E_\sa = E_+ - E_+$, where
	$$E_\sa:= \{x\in E:x^* = x\}.$$
	A partially ordered $^*$-vector space $E$ is called an \emph{ordered normed space}, if it is equipped with a norm such that the involution $^*$ is isometric and the cone $E_+$ is norm-closed. 
	In this case, we set 
	$$B_E:= \{x\in E: \|x\| \leq 1 \} \quad \text{and}\quad B^+_E:= \{x\in E_+: \|x\| \leq 1 \}.$$

Let us first recall the following well-known fact. 	
	%\medskip
	
	\begin{lem}\label{lem:image-weak-st-cont-bdd-below}
		Let $E$ and $F$ be (both real or complex) Banach spaces.
		If $\Phi:E^*\to F^*$ is a weak$^*$-continuous linear map which  is bounded below, in the sense that  $\inf\big\{\|\Phi(f)\|: f\in E^*; \|f\| =1 \big\} > 0$, then $\Phi(E^*)$ is a weak$^*$-closed subspace of $F^*$ and $\Phi$ is a weak$^*$-homeomorphism from $E^*$ onto $\Phi(E^*)$. 
	\end{lem}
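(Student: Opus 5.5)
The plan is to realize $\Phi$ as the adjoint of a bounded operator $\phi:F\to E$ and then use the closed range theorem together with the Krein--Smulian theorem.

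\textbf{Step 1: $\Phi$ is an adjoint.} Since $\Phi$ is weak$^*$-continuous, for each $y\in F$ the functional $f\mapsto \Phi(f)(y)$ on $E^*$ is weak$^*$-continuous. It is therefore given by evaluation at a unique $\phi(y)\in E$. The resulting map $\phi:F\to E$ is linear and satisfies $\phi^*=\Phi$. Boundedness of $\phi$ follows from the closed graph theorem, or directly: by the uniform boundedness principle applied to $\Phi$, the weak$^*$-continuous map $\Phi$ is norm bounded, and $\|\phi(y)\|=\sup_{\|f\|\le 1}|\Phi(f)(y)|\le\|\Phi\|\,\|y\|$.

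\textbf{Step 2: weak$^*$-closedness of the range.} Since $\Phi=\phi^*$ is bounded below, it is injective with norm-closed range. By the closed range theorem, $\phi$ then has closed range, and $\Phi(E^*)=\phi^*(E^*)=(\ker\phi)^\perp$. An annihilator is weak$^*$-closed, so $\Phi(E^*)$ is weak$^*$-closed.

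Alternatively, one can argue via Krein--Smulian without the closed range theorem. Let $c>0$ be the lower bound, so that $\Phi(E^*)\cap rB_{F^*}\subseteq \Phi(r c^{-1}B_{E^*})$. The latter set is weak$^*$-compact, being the continuous image of a compact ball. Hence the intersection equals $\Phi(rc^{-1}B_{E^*})\cap rB_{F^*}$, which is weak$^*$-compact and in particular weak$^*$-closed. By Krein--Smulian, $\Phi(E^*)$ is weak$^*$-closed.

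\textbf{Step 3: homeomorphism.} $\Phi$ is a continuous bijection from $E^*$ onto $\Phi(E^*)$, so it remains to show the inverse is weak$^*$-continuous. I expect this to be the main point needing care, since the balls are not open and compactness only gives homeomorphism on bounded sets.

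On each ball $rB_{E^*}$, $\Phi$ is a continuous injection from a compact space into a Hausdorff space, hence a homeomorphism onto its image. To pass to the whole space, note that $\Phi(E^*)$ is identified with the dual of $F/\ker\phi$ via $(\ker\phi)^\perp\cong (F/\ker\phi)^*$, with matching weak$^*$ topology. The inverse $\Psi:=\Phi^{-1}$ is a bounded map between dual spaces, since $\|\Psi\|\le c^{-1}$.

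A bounded linear map between dual spaces is weak$^*$-continuous iff its restriction to bounded sets is weak$^*$-continuous, again by Krein--Smulian, applied to the kernels of $g\circ\Psi$ for $g\in E$. The restriction of $\Psi$ to bounded sets is continuous, because $\Psi$ maps $\Phi(E^*)\cap rB_{F^*}$ into $rc^{-1}B_{E^*}$, where $\Phi$ is a homeomorphism. Hence $\Psi$ is weak$^*$-continuous.

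Equivalently, $\phi$ induces an isomorphism $F/\ker\phi\to\phi(F)=E$. Here $\phi(F)=E$, because $\phi^*$ injective gives dense range and Step 2 gives closed range. Its adjoint is $\Phi$, so $\Phi$ is the adjoint of a Banach isomorphism and hence a weak$^*$-homeomorphism.
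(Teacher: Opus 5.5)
The paper does not prove this lemma; it quotes it as a well-known fact, so there is no argument in the paper to compare yours against. Your proof is correct and is the standard one.

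\begin{itemize}
\item Step 1 correctly realizes $\Phi$ as $\phi^*$ for a bounded $\phi:F\to E$.
\item In Step 2, the closed range theorem gives $\Phi(E^*)=(\ker\phi)^\perp$. Your Krein--Smulian alternative is also valid, since $\Phi(rc^{-1}B_{E^*})\cap rB_{F^*}$ is weak$^*$-compact and hence weak$^*$-closed.
\item In Step 3, the Krein--Smulian argument for continuity of $\Psi$ is sound. This is because $\Phi(E^*)$ with the relative weak$^*$ topology is the dual Banach space $(F/\ker\phi)^*$. Alternatively, you could apply Krein--Smulian to $\ker(g\circ\Psi)$ directly inside $F^*$: its intersection with $rB_{F^*}$ is a closed subset of the compact set $\Phi(E^*)\cap rB_{F^*}$.
\end{itemize}

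Your final paragraph on its own already proves the whole lemma. Once you know $\phi$ is surjective, you are done: closed range follows from $\phi^*$ being bounded below via the closed range theorem, and dense range follows from $\phi^*$ being injective. Then $\Phi$ is the adjoint of the Banach isomorphism $\tilde\phi:F/\ker\phi\to E$, up to the canonical weak$^*$-homeomorphic identification $(F/\ker\phi)^*\cong(\ker\phi)^\perp$. Its inverse $(\tilde\phi^{-1})^*$ is weak$^*$-continuous, so weak$^*$-closedness of the range and the homeomorphism property come out at once.

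The ball-by-ball compactness argument is therefore redundant. Its only advantage is that it avoids the closed range theorem, at the cost of using Krein--Smulian twice.
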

	
	%\medskip
	
	Denote $\BM_\infty(E) := \bigcup_{n\in \BN} \BM_n(E)$, where $\BM_n(E)$ is the vector space of $n\times n$ matrices with coefficients in $E$ such that elements in $\BM_n(E)$ are regarded as elements in  $\BM_{n+1}(E)$ with their last columns and last rows both being zero. 
	For simplicity, we set $\BM_n:= \BM_n(\mathbb{C})$ and $\BM_\infty:= \BM_\infty(\mathbb{C})$. 
	Let us define a pairing between $\BM_\infty(E^*)$ and $\BM_\infty(E)$ as follows:
		\begin{equation}\label{eqt:def-inf-pair}
		(f,x) :={\sum}^\infty_{k,l =1} f_{k,l}(x_{k,l}), 
	\end{equation}
where $f=[f_{k,l}]_{k,l}\in \BM_\infty(E^*)$ and $x=[x_{k,l}]_{k,l}\in \BM_\infty(E)$.
	This pairing induces a topology $\sigma\big(\BM_\infty(E^*), \BM_\infty(E)\big)$ on $\BM_\infty(E^*)$ in the usual way. 
	As in \cite{OPS} (or \cite{PaulBook}), each linear map $\varphi:E\to F$ will induce a $\BM_\infty$-module map $\varphi^{(\infty)}:\BM_\infty(E) \to \BM_\infty(F)$. 
	
	%\medskip
	
	Recall that an \emph{operator space} is a vector space $X$ equipped with a norm on $\BM_\infty(X)$ that satisfies certain conditions (see \cite{OPS}).  
	If $Y$ is another operator space, then a linear map $\varphi:X\to Y$ is said to be 
	\begin{enumerate}
\item \emph{completely bounded} if $\varphi^{(\infty)}$ is bounded; 

		\item \emph{completely contractive} (or a \emph{complete contraction}) if $\varphi^{(\infty)}$ is contractive; 
		\item \emph{completely isometric} (or a \emph{complete isometry}) if $\varphi^{(\infty)}$ is isometric. 
	\end{enumerate}
	If $Y$ is another operator space, we denote by $\CB(X,Y)$ the set of all completely bounded maps from $X$ to $Y$.

	%\medskip
	
	Part (a) of the following is a natural notion to study dual spaces of operator systems, which  is a generalization of the notion of ``matrix ordered operator spaces'' as introduced in \cite{Wern-subsp}.

	%\medskip

	\begin{defn}
		(a) An operator space $X$ is called a \emph{semi-matrix ordered operator space (SMOS)} if
		\begin{itemize}
			\item $X$ has a $^*$-vector space structure (see \cite{CE}) such that the induced involution on $\BM_\infty(X)$ is isometric; 
			\item  there is a norm-closed subset
			$\BM_\infty(X)_+\subseteq \BM_\infty(X)_{sa}$ (called the \emph{matrix cone} of $X$)
			satisfying
			\[\alpha^* x\alpha + \beta^* y \beta\in \BM_\infty(X)_+ \qquad (\alpha,\beta\in \BM_\infty; x,y\in \BM_\infty(X)_+).\]
		\end{itemize}
		
		\smnoind
		(b) When $X$ and $Y$ are SMOS, a linear map $\varphi:X\to Y$ with $\varphi(X_\sa)\subseteq Y_\sa$ is called 
		\begin{enumerate}
			\item \emph{completely positive} if $\varphi^{(\infty)}\big(\BM_\infty(X)_+\big) \subseteq \BM_\infty(Y)_+$;
			
			\item a \emph{complete order monomorphism} if $\varphi$ is injective and $\varphi^{(\infty)}\big(\BM_\infty(X)_+\big) = \varphi^{(\infty)}\big(\BM_\infty(X)\big)\cap \BM_\infty(Y)_+$; 
			
			\item a \emph{complete order isomorphism} if $\varphi$ is a surjective complete order monomorphism. 
		\end{enumerate}
		The set of all completely positive complete contractions from $X$ to $Y$ is denoted by $\Morc(X,Y)$. 
		
		\smnoind
		(c) A SMOS $S$ is called an \emph{operator system} (respectively, a \emph{unital operator system}) if there exist a Hilbert space $\KH$ and a completely isometric complete order monomorphism $\Lambda:S\to \CL(\KH)$ (respectively, such that $1\in \Lambda(S)$). 
	\end{defn}

	%\medskip
	
		For basic informations on unital operator systems, the reader may consult, e.g., \cite{OPS} or \cite{PaulBook}.

	%\medskip
	Let $X$ be a SMOS.
	It is easy to see that $\Morc(X,\BM_n)$ is a compact Hausdorff space under the point-norm topology. 
	We consider the $C^*$-algebra 
	$$ \M(X) := {\bigoplus}_{n\in \BN}^{\ell^\infty}\, C\big(\Morc(X,\BM_n);\BM_n\big),$$
	%Although $\M(X)$ is also a von Neumann algebra, we will never consider its weak$^*$-topology, and see it as a $C^*$-algebra only. 
	and denote 
	$$j_X:X\to  \M(X)$$ 
	to be the canonical map given by evaluations. 
	It is easy to see that $j_X$ is a completely positive complete contraction. 
	
	%\medskip

%\begin{rem}\label{rem:j-iota}
%	Note that $\Morc(X,\BM_n)$ is a compact Hausdorff space under the point-norm topology. 
%	We set 
%	$$\mathrm{A}(X):={\bigoplus}_{n\in \BN}C\big(\Morc(X,\BM_n);\BM_n\big).$$ 
%	Then $\mathrm{A}(X)$ is a unital $C^*$-subalgebra of $ \M(X)$ and the map $j_X:X\to  \M(X)$ in the above is the composition of the evaluation map $$j_X:X\to \mathrm{A}(X)$$ as in \cite[Relation (2.1)]{Ng} with the inclusion map from $\mathrm{A}(X)$ to $ \M(X)$. 
%	Therefore, we will not distinguish the two notions of $j_X$ in the above.
%\end{rem}

	%\medskip
	
	\begin{lem}\label{lem:norm-ind-by-iota}
		Let $X$ be a SMOS.
		
		\smnoind
		(a) $\big(j_X^{(\infty)}\big)^{-1}\big(\BM_\infty(\M(X))_+\big)\cap \BM_\infty(X)_\sa = \BM_\infty(X)_+$.

		\smnoind
		(b) $X$ is an operator system if and only if $j_X$ is completely isometric. 
	\end{lem}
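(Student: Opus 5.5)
The plan is to prove (a) first, using a separation argument in the spirit of Effros--Winkler, and then derive (b) from (a) together with an order-embedding into $\CL(\KH)$.

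For (a), the inclusion $\BM_\infty(X)_+ \subseteq$ (left side) is immediate. Indeed, $j_X$ is completely positive and $\BM_\infty(X)_+\subseteq \BM_\infty(X)_\sa$ by definition.

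For the reverse inclusion, I argue by contraposition. Suppose $x\in \BM_n(X)_\sa\setminus \BM_n(X)_+$. Since $\BM_n(X)_+$ is a norm-closed convex cone in the real normed space $\BM_n(X)_\sa$, the Hahn--Banach theorem gives a real continuous functional $\rho$ on $\BM_n(X)_\sa$ with $\rho(\BM_n(X)_+)\ge 0$ and $\rho(x)<0$. The standard Effros--Winkler/Choi--Effros correspondence then turns $\rho$ into a linear map $\phi: X\to \BM_n$ given by $\phi(a)_{k,l}=\tilde\rho(a\otimes e_{k,l})$, where $\tilde \rho$ is the complexified functional. Using the closure of the cones under the conjugations $\alpha^* y\alpha$ and direct sums, I will check that $\phi$ is completely positive. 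Moreover $\phi^{(n)}(x)$ is not positive, because pairing it with the vector $\sum_k e_k\otimes e_k$ recovers $\rho(x)<0$.

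Since $\phi$ is bounded (because $\rho$ is), after rescaling I may assume $\phi\in \Morc(X,\BM_n)$. Then the $\phi$-coordinate of $j_X^{(n)}(x)$ is $\phi^{(n)}(x)\notin (\BM_n)_+$, so $j_X^{(n)}(x)\notin \BM_n(\M(X))_+$. This gives (a).

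The main obstacle is verifying that this $\phi$ is completely positive, not merely $n$-positive. For $m$-positivity with $m\neq n$ I will use the module property: for $y\in \BM_m(X)_+$ and $\alpha\in \BM_{m,n}$, the element $\alpha^* y\alpha$ lies in $\BM_n(X)_+$. Then $\langle\phi^{(m)}(y)\xi,\xi\rangle$ can be rewritten as $\rho(\alpha_\xi^* y\alpha_\xi)\ge0$ for a suitable scalar matrix $\alpha_\xi$ built from $\xi$. This is exactly the computation from the unital theory, and it uses only the SMOS axioms, so I expect it to go through.

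For (b), if $j_X$ is completely isometric then $X$ is an operator system. Take a faithful representation $\M(X)\subseteq\CL(\KH)$. Part (a) shows that $j_X$ is a complete order monomorphism: it is injective because it is isometric, and its positive cone is the pullback of the cone of $\M(X)$ intersected with the self-adjoint part.

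Conversely, let $\Lambda:X\to\CL(\KH)$ be a completely isometric complete order monomorphism. First, $j_X$ is completely contractive. For the reverse inequality, given $x\in\BM_n(X)$ and $\epsilon>0$, I choose a finite-rank compression $P_F\Lambda(\cdot)P_F$ onto a finite-dimensional subspace $F$ of dimension $m$. This compression is a completely positive complete contraction $\psi:X\to\BM_m$ with $\|\psi^{(n)}(x)\|>\|x\|-\epsilon$. Hence $\|j_X^{(n)}(x)\|\ge\|x\|-\epsilon$, so $j_X$ is completely isometric.
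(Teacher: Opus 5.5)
Your argument is correct. The paper itself gives no proof of this lemma; it only cites \cite[Lemma 4.2(a),(c)]{JN2}. So what you have written is a self-contained replacement rather than a variant of an argument in the text.

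For (a) you separate $x\in\BM_n(X)_\sa\setminus\BM_n(X)_+$ from the closed convex cone $\BM_n(X)_+$ by a real functional $\rho$. You then convert $\tilde\rho$ into $\phi(a)_{k,l}=\tilde\rho(a\otimes e_{k,l})$, using $\langle\phi^{(m)}(y)\xi,\xi\rangle=\rho(\alpha_\xi^*y\alpha_\xi)$. This makes visible that (a) uses only two properties of the cone: it is norm-closed and stable under $y\mapsto\alpha^*y\alpha$. No further compatibility between norm and order is needed, which is why it holds for an arbitrary SMOS. Your proof of (b) reduces to the fact that the matrix norms of a concrete operator system are attained on compressions to finite-dimensional subspaces. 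The citation is shorter, but your version shows exactly which axioms are used.

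A few routine points should be written out explicitly:
\begin{enumerate}
\item The paper's notion of complete positivity presupposes $\phi(X_\sa)\subseteq(\BM_n)_\sa$. This follows from $\tilde\rho(z^*)=\overline{\tilde\rho(z)}$.
\item Boundedness of $\phi$ yields complete boundedness only because the target is $\BM_n$. One can see this entrywise, since bounded functionals on an operator space are completely bounded. This is what justifies rescaling $\phi$ into $\Morc(X,\BM_n)$.
\item The SMOS axiom is stated for square $\alpha\in\BM_\infty$, so for $\alpha_\xi\in\BM_{m,n}$ you should pad with zeros.
\item In the forward direction of (b), an element $y$ with $j_X^{(\infty)}(y)$ positive must first be shown to be self-adjoint before (a) applies. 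This holds because $j_X$ is $^*$-preserving and injective.
\item In the converse of (b), the compression $\psi$ is completely positive and $^*$-preserving. This is because a complete order monomorphism $\Lambda$ is, by definition, self-adjoint and maps matrix cones into matrix cones.
\end{enumerate}
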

	
	%\medskip
	
	In fact, part (a) above follows from \cite[Lemma 4.2(a)]{JN2}, % and Remark \ref{rem:j-iota}, 
	while part (b) follows from \cite[Lemma 4.2(c)]{JN2}. %and Remark \ref{rem:j-iota}. 

%\medskip
	
	For a complete SMOS $Y$, we identify 
	\begin{equation}\label{eqt:Mn(X*)}
		M_n(Y^*) = \CB(Y;\BM_n) \quad (n\in \BN)
	\end{equation}
	in the canonical way.
	If $Y^*$ is equipped with the weak$^*$-topology $\sigma(Y^*,Y)$, the canonical dual operator space structure, the involution given by $f^*(y) := \overline{f(y^*)}$ ($y\in Y$), and the following matrix cone 
	$$\BM_n(Y^*)_+:= \big\{ \varphi\in \CB(Y, \BM_n): \varphi \text{ is completely positive}\big\}\quad (n\in \BN),$$
	then we call $Y^*$ a \emph{dual SMOS}. 
	In this case,  $Y$ is called the \emph{predual} of $Y^*$.

%	If we set $V:=Y^*$, then we denote $V_*:=Y$. 
	
	%\medskip

	Note that the predual of a dual SMOS is unique  (as it is determined by the weak$^*$-topology). 
%	However, it is not the case if the weak$^*$-topology is not taken into account.
%	In fact, it is possible to have two different complete SMOS $X$ and $Y$ such that one can find a completely isometric complete order isomorphism from $X^*$ onto $Y^*$ (see \cite[Proposition 2.3]{BM-dual}).
%	In fact, in \cite[Proposition 2.3]{BM-dual}, both $X^*$ and $Y^*$ are even unital dual operator systems in the following sense.  

	%It is not hard to see that  
	%\begin{equation}\label{eqt:pos-cone}
	%\BM_1(Y^*_+) = Y^*_+ = \{f\in Y^*: f^* = f; f(Y_+)\subseteq \RP\}.
	%\end{equation}
	%%\medskip
	
	%\medskip
	
	\begin{defn}
		Let $Y$ be a complete SMOS. 
		We say that the dual SMOS $Y^*$ is a \emph{dual operator system} (respectively, \emph{unital dual operator system}) if there exist a Hilbert space $\KH$ and a weak$^*$-homeomorphic completely isometric complete order isomorphism $\Lambda$ from $Y^*$ onto a weak$^*$-closed subspace of $\CL(\KH)$ (respectively, such that  $1\in \Lambda(Y^*)$). 
	\end{defn}

	%\medskip
	Notice that in the definitions above, an operator system (respectively, a dual operator system) is a self-adjoint (respectively, and weak$^*$-closed) subspace of some $\CL(\KH)$, equipped with the induced matrix norm and the induced matrix cone (respectively, together with the induced weak$^*$-topology). 
	These definitions are slightly different from the ones in some literature, where the unital assumption is usually imposed (see e.g., \cite{BM-dual,OPS}).
	
	For complete SMOS $X$ and $Y$, the set of all weak$^*$-continuous completely positive complete contractions from $Y^*$ to $X^*$ is denoted by $\Morc_w(Y^*,X^*)$. 
From now on, we will use  the following canonical identifications (see \eqref{eqt:Mn(X*)}):
\begin{equation}\label{eqt:CCP-B+}
	\Morc(Y;\BM_n) = B^+_{\BM_n(Y^{*})} \quad \text{and} \quad \Morc_w(Y^*;\BM_n) = B^+_{\BM_n(Y)}.
\end{equation}

	Let us put
	$$ \N(Y^*) := {\bigoplus}_{n\in \BN}^{\ell^\infty}\, \ell^\infty\big(\Morc_w(Y^*,\BM_n);\BM_n\big) = {\bigoplus}_{n\in \BN}^{\ell^\infty}\, \ell^\infty\big(B^+_{\BM_n(Y)};\BM_n\big),$$
	and denote $\mu_{Y^*}:Y^*\to  \N(Y^*)$ to be the canonical map given by evaluations. 
	Notice that $\N(Y^*)$ is a von Neumann algebra with its predual being 
	$$ \N(Y^*)_* = {\bigoplus}_{n\in \BN}^{\ell^1}\, \ell^1\big(\Morc_w(Y^*,\BM_n);\BM_n^*\big) = {\bigoplus}_{n\in \BN}^{\ell^1}\, \ell^1\big(B^+_{\BM_n(Y)};\BM_n^*\big).$$
It is easy to see that  $\mu_{Y^*}$ is a weak$^*$-continuous completely positive complete contraction. 
	
	%\medskip

Since $\Morc_w(Y^*,\BM_n)\subseteq \Morc(Y^*,\BM_n)$, one obtains a canonical $^*$-homomorphism 
$Q_{Y^*}: \M(Y^*)\to  \N(Y^*)$, via truncation,  
satisfying 
\begin{equation}\label{eqt:mu=Q-j}
\mu_{Y^*} = Q_{Y^*}\circ j_{Y^*}.
\end{equation}

	\begin{lem}\label{lem:unit-ball-weak-st-dense}
	Suppose that $Y$ is a complete SMOS and $n\in \BN$. 
	
	\smnoind
	(a) $\kappa_Y^{(n)}\big(B_{\BM_n(Y)}\big)$ is $\sigma\big(\BM_n(Y^{**}), \BM_n(Y^*)\big)$-dense in $B_{\BM_n(Y^{**})}.$
	
	\smnoind
	(b) $\kappa_Y^{(n)}\big(B_{\BM_n(Y)}^+\big)$ is $\sigma\big(\BM_n(Y^{**}), \BM_n(Y^*)\big)$-dense in $B_{\BM_n(Y^{**})}^+.$

	\smnoind
	(c) $B_{\BM_n(Y^*)}$ is $\sigma\big(\BM_n(Y^*), \BM_n(Y)\big)$-compact.
\end{lem}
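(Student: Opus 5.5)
We treat the three parts separately. Parts (a) and (c) are operator space versions of Goldstine's and Alaoglu's theorems, and part (b) is the positive version of Goldstine, which needs the matrix cone.

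For (c), we use the identification \eqref{eqt:Mn(X*)}, $\BM_n(Y^*) = \CB(Y;\BM_n)$. Under it, the pairing between $\BM_n(Y^*)$ and $\BM_n(Y)$ from \eqref{eqt:def-inf-pair} shows that $\BM_n(Y^*)$ is the Banach dual of $\BM_n(Y)$ equipped with a suitable dual operator space norm. Concretely, $\BM_n(Y^*)\cong (T_n(Y))^*$ isometrically and weak$^*$-homeomorphically, where $T_n(Y)$ is $\BM_n(Y)$ with the operator space projective-type norm (the standard duality $\BM_n(Y^*) = (S^1_n\hat\otimes Y)^*$). Since $T_n(Y)$ and $\BM_n(Y)$ are the same vector space with equivalent norms, the weak$^*$-topology is exactly $\sigma(\BM_n(Y^*),\BM_n(Y))$. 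Alaoglu's theorem then gives the compactness of the unit ball.

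Alternatively, we avoid tensor products and show directly that $B_{\BM_n(Y^*)}$ is closed in a product of compact discs. An element $f$ has norm at most $1$ iff $\|f^{(m)}(y)\|\le 1$ for all $m$ and all $y\in B_{\BM_m(Y)}$. Each such condition is $\sigma(\BM_n(Y^*),\BM_n(Y))$-closed, because the entries of $f^{(m)}(y)$ are finite sums of the scalars $f_{k,l}(y_{i,j})$. These scalars are values of the pairing on elementary matrices.

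For (a), the plan is the bipolar theorem for the dual pair $(\BM_n(Y^{**}),\BM_n(Y^*))$. Let $C$ be the weak$^*$-closure of $\kappa_Y^{(n)}(B_{\BM_n(Y)})$; it is absolutely convex. If some $F\in B_{\BM_n(Y^{**})}\setminus C$ existed, Hahn--Banach would give $g\in \BM_n(Y^*)$ with $\mathrm{Re}(F,g)>1\ge \sup_{y\in B_{\BM_n(Y)}}\mathrm{Re}(g,y)$. The right-hand inequality says that the dual norm of $g$ with respect to the pairing is at most $1$. One checks that this dual norm is the norm $\|\cdot\|_*$ that makes $\BM_n(Y^{**})$ isometrically the dual of $(\BM_n(Y^*),\|\cdot\|_*)$. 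That is the same matrix-duality identity as in (c), now applied one level up, using that $\kappa_Y$ is a complete isometry. Then $|(F,g)|\le \|F\|\,\|g\|_*\le 1$, a contradiction.

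For (b) we repeat the separation argument, now for the convex set $\kappa_Y^{(n)}(B^+_{\BM_n(Y)})$. The separating functional $g$ can be replaced by its self-adjoint part $\frac12(g+g^*)$ with respect to the pairing, so that real parts become real values. The key point, and the main obstacle, is to compute $\sup\{(g,y): y\in B^+_{\BM_n(Y)}\}$ and show that it dominates $(F,g)$ for every $F\in B^+_{\BM_n(Y^{**})}$. We plan to do this by describing positive elements of $\BM_n(Y^{**})$ as completely positive weak$^*$-continuous maps into $\BM_n$. The sup over the positive ball is then a positive-part norm of $g$ (the norm of the positive part of $g$ in a suitable sense), and its bidual counterpart has the same value by (a) applied to the cone. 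Concretely, we would invoke Lemma \ref{lem:norm-ind-by-iota} to realize $Y$ inside $\M(Y)$. Writing $g=g_+-g_-$ via Jordan-type decompositions of functionals on the $C^*$-algebra $\M(Y)$, the supremum over the positive ball of $Y^{**}$ equals that over $Y$ by Kaplansky density in $\M(Y)^{**}$, restricted back to $Y$. The delicate point is that positive elements of $Y^{**}$ must be weak$^*$-approximable by positive norm-$\le 1$ elements of $Y$. This is where closedness of the cone, and the weak$^*$-density of $\kappa$ restricted to positive elements, must be checked carefully.
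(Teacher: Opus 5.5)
Your part (c) is the paper's own argument: identify $\BM_n(Y^*)$ with $T_n(Y)^*$, note that the two pairings coincide, and apply Alaoglu. Your bipolar proof of (a) is the standard Goldstine argument; the paper simply quotes this part from the literature. One caution in (a): complete isometry of $\kappa_Y$ together with $T_n(Y^*)^*\cong \BM_n(Y^{**})$ only gives $\sup_{y\in B_{\BM_n(Y)}}|(g,y)|\le \|g\|_{T_n(Y^*)}$. You need the reverse inequality. That is the isometric identification $\BM_n(Y)^*\cong T_n(Y^*)$ (equivalently $\BM_n(Y)^{**}\cong \BM_n(Y^{**})$), a genuine theorem of Effros--Ruan that should be quoted rather than ``checked''.

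The real gap is in (b). Your reduction is correct: after replacing the separating functional by its self-adjoint part, everything rests on the inequality $(F,g)\le \sup\{(g,y): y\in B^+_{\BM_n(Y)}\}$ for all $F\in B^+_{\BM_n(Y^{**})}$. Nothing in the proposal proves it. ``Its bidual counterpart has the same value by (a) applied to the cone'' is circular, since the cone version of (a) is (b) itself, and your final sentence concedes that the ``delicate point'' is exactly statement (b). The route through $\M(Y)$ fails for three reasons. First, for a general SMOS, Lemma \ref{lem:norm-ind-by-iota} recovers only the matrix cone from $j_Y$, and $j_Y$ is completely isometric only when $Y$ is an operator system, so $B^+_{\BM_n(Y)}$ is not cut out of $\M(Y)$. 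Second, even when $j_Y$ is completely isometric, Kaplansky density approximates elements of $\M(Y)^{**}$ by elements of $\M(Y)$, not of $j_Y(Y)$. Third, Jordan-decomposing an extension $\tilde g=\tilde g_+-\tilde g_-$ of $g$ to $\M(Y)$ yields only $(F,g)\le \|\tilde g_+\|$. There is no reason for $\|\tilde g_+\|$ to be bounded by $\sup\{(g,y): y\in B^+_{\BM_n(Y)}\}$: the extension may be large on positive elements of $\M(Y)$ outside $j_Y(Y)$, and with no unit in $Y$ nothing controls this.

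What is missing is a decomposition $g=k-h$. Let $P:=\BM_n(Y^*)_+$, and let $D$ be the set of self-adjoint $k\in \BM_n(Y^*)$ with $|(k,y)|\le \|y\|$ for all $y\in \BM_n(Y)$. Since $D$ is $\sigma(\BM_n(Y^*),\BM_n(Y))$-compact and $P$ is closed, $D-P$ is closed and convex. Its polar in $\BM_n(Y)_\sa$ is exactly $B^+_{\BM_n(Y)}$, for two reasons. The constraint from $-P$ forces $(h,y)\ge 0$ for all $h\in P$. This yields $y\in \BM_n(Y)_+$ by Hahn--Banach, because $\BM_n(Y)_+$ is norm-closed, and because a self-adjoint functional $y\mapsto (h,y)=\langle h^{(n)}(y)e,e\rangle$ is positive on $\BM_n(Y)_+$ if and only if $h$ is completely positive. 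Here $e=\sum_i e_i\otimes e_i$, $h$ is viewed in $\CB(Y,\BM_n)$, and the equivalence follows from the axiom $\alpha^*x\alpha\in \BM_\infty(Y)_+$. The constraint from $D$ gives $\|y\|\le 1$, using that the involution is isometric. By the bipolar theorem, every self-adjoint $g$ with $(g,y)\le 1$ on $B^+_{\BM_n(Y)}$ has the form $k-h$ with $k\in D$ and $h\in P$. Then for $F\in B^+_{\BM_n(Y^{**})}$ we get $(F,g)=(F,k)-(F,h)\le (F,k)\le 1$, using $(F,h)=\langle F^{(n)}(h)e,e\rangle\ge 0$ and the norm duality from (a). This is where the closedness of the cone really enters.
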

\begin{proof}
	(a) This is precisely \cite[Corollary 2.8]{JN2}. 
	
\smnoind
(b)	This is taken from \cite[Proposition 12]{JN}. 
	
	\smnoind
	(c) It follows from \cite[Lemma 4.1.1]{OPS} that one may identify $\BM_n(Y^*)$ with the dual space $T_n(Y)^*$ isometrically such that  the duality between $\BM_n(Y^*)$ and $T_n(Y)$ is given by 
	$$ 	[f,x] ={\sum}^n_{k,l =1} (f_{k,l},x_{k,l}) \quad (f=[f_{k,l}]_{k,l}\in \BM_n(Y^*); x=[x_{k,l}]_{k,l}\in T_n(Y)). $$
	Since $[\cdot, \cdot]$ coincides with the pairing $(\cdot, \cdot)$ as in \eqref{eqt:def-inf-pair} when $T_n(Y)$ is identified with $\BM_n(Y)$ as vector spaces, we see that $\sigma\big(\BM_n(Y^*), \BM_n(Y)\big)$ and $\sigma\big(\BM_n(Y^*), T_n(Y)\big)$ coincide.
	Thus, the conclusion follows from the Banach-Alaoglu theorem. 
\end{proof}

One can use part (b) above (see also \eqref{eqt:CCP-B+}) to obtain Relations (4.6) and (4.7) of \cite{JN2}; 
in particular, we have  
\begin{equation}\label{eqt:iota=j}
	\|\mu_{Y^*}^{(\infty)}(x)\| = \|j_{Y^*}^{(\infty)}(x)\|\qquad (x\in \BM_\infty(Y^*)),
\end{equation}
and
\begin{align}\label{eqt:iota-inv=j-inv}
	\BM_\infty(Y^*)_+ \ =\ &  \big(\mu_{Y^*}^{(\infty)}\big)^{-1} \big(\BM_\infty(\N(Y^*))_+\big) \cap \BM_\infty(Y^*)_\sa \nonumber \\
	\ =\ &  \big(j_{Y^*}^{(\infty)}\big)^{-1}\big(\BM_\infty(\M(Y^*))_+\big) \cap \BM_\infty(Y^*)_\sa,  
\end{align}
which produces 
\begin{equation*}\label{eqt:Q-j=iota}
	Q_{Y^*}\big(j_{Y^*}^{(\infty)}(\BM_\infty(Y^*))_+\big) =  \mu_{Y^*}^{(\infty)}(\BM_\infty(Y^*))_+.
\end{equation*}
From these, we have the following lemma. 

\begin{lem}\label{lem:Q}
The restriction $Q_{Y^*}|_{j_{Y^*}(Y^*)}$ is a completely isometric complete order isomorphism from $j_{Y^*}(Y^*)$ onto $\mu_{Y^*}(Y^*)$.
\end{lem}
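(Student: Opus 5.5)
The plan is to deduce everything from the two displayed relations \eqref{eqt:iota=j} and \eqref{eqt:iota-inv=j-inv} together with the factorization \eqref{eqt:mu=Q-j}. The map $Q_{Y^*}$ is a $^*$-homomorphism, so its restriction to the self-adjoint subspace $j_{Y^*}(Y^*)$ is completely positive and completely contractive. By \eqref{eqt:mu=Q-j}, this restriction maps $j_{Y^*}(Y^*)$ onto $\mu_{Y^*}(Y^*)$. Since both $j_{Y^*}$ and $\mu_{Y^*}$ are self-adjoint maps and $Q_{Y^*}$ is $^*$-preserving, the restriction also respects the involutions.

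\textbf{Complete isometry.} Let $z\in \BM_\infty(j_{Y^*}(Y^*))$ and write $z=j_{Y^*}^{(\infty)}(x)$ with $x\in \BM_\infty(Y^*)$. Then
\begin{equation*}
\big\|Q_{Y^*}^{(\infty)}(z)\big\| = \big\|\mu_{Y^*}^{(\infty)}(x)\big\| = \big\|j_{Y^*}^{(\infty)}(x)\big\| = \|z\|
\end{equation*}
by \eqref{eqt:mu=Q-j} and \eqref{eqt:iota=j}. In particular the restriction is injective, because an isometry has zero kernel. So it is a linear bijection onto $\mu_{Y^*}(Y^*)$.

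\textbf{Order part.} We must show that $Q_{Y^*}^{(\infty)}$ carries $\BM_\infty(j_{Y^*}(Y^*))_+$ exactly onto $\BM_\infty(\mu_{Y^*}(Y^*))_+$, where each cone is induced from the ambient $C^*$-algebra. One inclusion is immediate, since $Q_{Y^*}$ is a $^*$-homomorphism and hence completely positive. For the reverse inclusion, take $w=\mu_{Y^*}^{(\infty)}(x)\ge 0$. By the injectivity of $\mu_{Y^*}$ (which follows from the isometry above), we may assume $x$ is self-adjoint: otherwise replace $x$ by $(x+x^*)/2$, which has the same image $w$ because $w=w^*$. The first equality in \eqref{eqt:iota-inv=j-inv} then gives $x\in \BM_\infty(Y^*)_+$. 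The second equality in \eqref{eqt:iota-inv=j-inv} gives $j_{Y^*}^{(\infty)}(x)\ge 0$. Hence $w=Q_{Y^*}^{(\infty)}(j_{Y^*}^{(\infty)}(x))$ is the image of a positive element, which proves surjectivity onto the positive cone. The displayed identity just before the lemma records exactly this conclusion.

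\textbf{Main obstacle.} The only subtle step is the self-adjointness reduction in the order part: we need $x$ itself to be self-adjoint in order to apply \eqref{eqt:iota-inv=j-inv}. This is handled by injectivity together with the $^*$-preservation of $\mu_{Y^*}$. Everything else is bookkeeping, since the substantive input, namely Lemma \ref{lem:unit-ball-weak-st-dense}(b) behind \eqref{eqt:iota=j} and \eqref{eqt:iota-inv=j-inv}, is already available.
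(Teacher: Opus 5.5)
Your argument is correct and follows the paper's route: the lemma is read off from the factorization \eqref{eqt:mu=Q-j}, the norm identity \eqref{eqt:iota=j}, and the cone identity obtained from \eqref{eqt:iota-inv=j-inv}, which you simply spell out. One correction: $\mu_{Y^*}$ need not be injective. The isometry only gives $\ker\mu_{Y^*}=\ker j_{Y^*}$, and for instance $\mu_{Y^*}=0$ when $\BM_\infty(Y)_+=\{0\}$. You never actually need that claim, because $\mu_{Y^*}^{(\infty)}\big((x+x^*)/2\big)=(w+w^*)/2=w$ follows from $^*$-preservation alone.
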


%\medskip

The above also produces the following result (see \cite[Theorem 4.8(c)]{JN2}). 
In fact, the forward implication of this result is given by Lemma \ref{lem:norm-ind-by-iota}(b) and \eqref{eqt:iota=j}, while the backward implication is given by Lemma \ref{lem:image-weak-st-cont-bdd-below} (note that $\mu_{Y^*}$ is weak$^*$-continuous) and \eqref{eqt:iota-inv=j-inv}.

	%\medskip
	
	\begin{lem}\label{lem:mu-compl-ord-mono}
		Let $Y$ be a complete SMOS.
		Then $Y^*$ is a dual operator system if and only if $\mu_{Y^*}$ is completely isometric. 
	\end{lem}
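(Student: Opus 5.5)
We prove the two implications separately. The forward implication uses Lemma \ref{lem:norm-ind-by-iota}(b) together with \eqref{eqt:iota=j}. The backward implication uses Lemma \ref{lem:image-weak-st-cont-bdd-below} together with \eqref{eqt:iota-inv=j-inv}, in order to realize $Y^*$ weak$^*$-homeomorphically inside the von Neumann algebra $\N(Y^*)$.

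\emph{Forward direction.} Suppose $Y^*$ is a dual operator system. Then in particular $Y^*$ is an operator system, since the map $\Lambda$ in the definition is a completely isometric complete order monomorphism into $\CL(\KH)$. By Lemma \ref{lem:norm-ind-by-iota}(b), the map $j_{Y^*}$ is completely isometric. Relation \eqref{eqt:iota=j} gives $\|\mu_{Y^*}^{(\infty)}(x)\| = \|j_{Y^*}^{(\infty)}(x)\| = \|x\|$ for every $x\in \BM_\infty(Y^*)$. Hence $\mu_{Y^*}$ is completely isometric.

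\emph{Backward direction.} Suppose $\mu_{Y^*}$ is completely isometric. Since $\N(Y^*)$ is a von Neumann algebra, we may represent it normally and faithfully as a weak$^*$-closed $^*$-subalgebra of some $\CL(\KH)$. This representation is a weak$^*$-homeomorphic complete isometry and a complete order isomorphism onto its image. So it suffices to check that $\mu_{Y^*}$ is a weak$^*$-homeomorphic completely isometric complete order isomorphism onto a weak$^*$-closed subspace of $\N(Y^*)$. We verify the three properties in turn.
\begin{enumerate}
\item \emph{Topology.} $\mu_{Y^*}$ is weak$^*$-continuous and isometric, so it is bounded below with constant $1$. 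Lemma \ref{lem:image-weak-st-cont-bdd-below}, applied with $E=Y$ and $F=\N(Y^*)_*$, shows that $\mu_{Y^*}(Y^*)$ is weak$^*$-closed and that $\mu_{Y^*}$ is a weak$^*$-homeomorphism onto it.
\item \emph{Order.} $\mu_{Y^*}$ is completely positive, and it is injective because it is isometric. Being completely positive and $^*$-preserving, it maps self-adjoint elements to self-adjoint elements. Let $y=\mu_{Y^*}^{(\infty)}(x)$ be self-adjoint and positive in $\BM_\infty(\N(Y^*))$. Since $\mu_{Y^*}^{(\infty)}(x^*)=y^*=y$, injectivity gives $x\in \BM_\infty(Y^*)_\sa$. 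Then \eqref{eqt:iota-inv=j-inv} gives $x\in \BM_\infty(Y^*)_+$. Thus $\mu_{Y^*}$ is a complete order monomorphism.
\item \emph{Norm.} Complete isometry holds by hypothesis.
\end{enumerate}

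The main point needing care is the order step: one must confirm that $\mu_{Y^*}$ commutes with the involution, so that membership in the preimage cone forces self-adjointness. This follows directly from the definition of the involution on $Y^*$ and from evaluation at $^*$-preserving maps. Everything else is a direct invocation of the cited results.
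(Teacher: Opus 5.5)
Your proposal is correct and follows the paper's route: the forward implication via Lemma~\ref{lem:norm-ind-by-iota}(b) and \eqref{eqt:iota=j}, and the backward implication via Lemma~\ref{lem:image-weak-st-cont-bdd-below} (using the weak$^*$-continuity of $\mu_{Y^*}$) together with \eqref{eqt:iota-inv=j-inv}. You also spell out details the paper's one-line argument leaves implicit: the faithful normal representation of $\N(Y^*)$, and the injectivity argument showing that positive elements of $\mu_{Y^*}^{(\infty)}(\BM_\infty(Y^*))$ come from self-adjoint matrices.
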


	%\medskip
	
	\begin{ntn}\label{ntn:predual-map}
		Let $X$ be a SMOS, $Y$ be a complete SMOS, and $V$ and $W$ be dual operator systems. 
		
		\smnoind
		(a) Denote by $X\check{\ }$ the operator subsystem $j_X(X)\subseteq  \M(X)$, and by $X_\os$ the norm-closure of $X\check{\ }$ in $\M(X)$, equipped with the induced operator system structure.

		\smnoind
		(b) We will use $\iota_{X}:X\to X\check{\ }\subseteq X_\os$, instead of $j_X$, to denote  the canonical map from $X$ onto $X\check{\ }$.
		
		\smnoind
		(c) As in Lemma \ref{lem:Q}, we identify the operator subsystem 
		$$(Y^*)\check{\ } = j_{Y^*}(Y^*)\subseteq \M(Y^*)$$  
		with the operator subsystem $\mu_{Y^*}(Y^*)\subseteq \N(Y^*)$. 
Thus, we may also identify $\iota_{Y^*}$ as the map $\mu_{Y^*}$ from $Y^*$ to $(Y^*)\check{\ }$. 
		Under this identification, 
		\begin{quotation}
			we consider $Y^\rd$ to be the weak$^*$-closure of $\iota_{Y^*}(Y^*)$ in $\N(Y^*)$;
		\end{quotation}
		in other words, $Y^\rd$ is the completion of a quotient of $Y^*$ under the vector topology induced from $\mu_{Y^*}$, equipped with the dual operator system structure induced from $\N(Y^*)$. 
		For simplicity, we denote $Y^{\rd\rd} := (Y^\rd)^\rd$.

\smnoind
(d) We always use $V_*$ to denote the fixed predual of $V$. 
		
		\smnoind
		(e) For $\phi\in \Morc_w(V,W)$, we let $\phi_*:W_*\to V_*$ 
		be the bounded linear map given by $\phi_*= \phi^*|_{W_*}$. 
	\end{ntn}

	%\medskip

	\section{operator system preduals of dual operator systems}
	
	%\medskip
	
	Let us begin with the following lemma. 
	The statements in this lemma can be found in either \cite{JN2}, \cite{Ng} or \cite{Ng1} (except that part (a) is formally different from the corresponding result in \cite{Ng}, and part (d) is a generalization of the corresponding result in \cite{JN2}).   
	We give their arguments here for completeness.

	%\medskip
	
	\begin{lem}\label{lem:JN2}
		Let $X$ be a SMOS, $Y$ be a complete SMOS, $V$ be a dual operator system, and $S$ and $T$ be complete operator systems.
		As in parts (b) and (c) of Notation \ref{ntn:predual-map}, we identify $\iota_X = j_X$, $\iota_{Y^*} = \mu_{Y^*}$, $\iota_{S^*} = \mu_{S^*}$ and $\iota_{T^*} = \mu_{T^*}$.
		
		\smnoind
		(a) For any $\varphi \in \Morc(X,T)$, there exists a unique map $\varphi_\os\in \Morc(X_\os, T)$ such that $\varphi_\os\circ \iota_X = \varphi$.
		
		\smnoind
		(b) For any $\psi \in \Morc_w(Y^*,V)$, there exists a unique map $\overline{\psi}\in \Morc_w(Y^\rd, V)$ such that $\overline{\psi}\circ \iota_{Y^*} = \psi$.
		
		\smnoind
		(c) For any $\varphi\in \Morc(S,T)$, there exists a unique map $\varphi^\rd\in \Morc_w(T^\rd,S^\rd)$ such that $\varphi^\rd\circ \iota_{T^*} = \iota_{S^*}\circ \varphi^*$; in other words, the following diagram commutes: 
		\begin{equation}\label{eqt:def-psi-d}
			\xymatrix{
				& T^* \ar[d]_{\iota_{T^*}} \ar[r]^{\varphi^*} & S^* \ar[d]^{\iota_{S^*}} \\
				& T^\rd \ar[r]^{\varphi^\rd} & S^\rd
			}
		\end{equation}  
		
		\smnoind
		(d) $Y$ has a generating cone if and only if  $\iota_{Y^*}:Y^*\to \N(Y^*)$ is bounded below.
		In this case, $\iota_{Y^*}:Y^*\to Y^\rd$ is a weak$^*$-homeomorphic complete order isomorphism. 
		
		\smnoind
		(e) If there exists a completely positive complete isometry $\varphi:X\to T$, then $X$ is an operator system.
		
		\smnoind
		(f) $T^{**}$ is a dual operator system and $T^*$ has a generating cone. 
		
		\smnoind
		(g) If $T$ has a generating cone, then $T^\rd$ has a generating cone.
	\end{lem}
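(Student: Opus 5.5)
We prove the seven parts one at a time. Each uses the universal constructions $\M(X)$ and $\N(Y^*)$ together with the norm and order identities \eqref{eqt:iota=j} and \eqref{eqt:iota-inv=j-inv}.

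\textbf{(a)} Represent $T\subseteq \CL(\KH)$. For uniqueness it is enough to note that $\iota_X(X)$ is dense in $X_\os$ and that a contraction is continuous. For existence, define $\varphi_\os$ on $X\check{\ }$ by $\varphi_\os(\iota_X(x)) := \varphi(x)$. It is well defined and completely contractive: every $\omega\in\Morc(T,\BM_n)$ gives $\omega\circ\varphi\in\Morc(X,\BM_n)$, so
$$\|\varphi^{(m)}(x)\| = \sup_{n,\omega}\|(\omega\circ\varphi)^{(m)}(x)\| \le \|j_X^{(m)}(x)\|,$$
where the first equality is Lemma \ref{lem:norm-ind-by-iota}(b) applied to $T$. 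The same evaluation argument shows complete positivity. Suppose $j_X^{(m)}(x)\geq 0$. Then $(\omega\circ\varphi)^{(m)}(x)\ge 0$ for all such $\omega$. Since $\varphi(x)$ is self-adjoint, which follows from $\iota_X^{(m)}(x)$ self-adjoint because $\varphi$ factors through $\iota_X$ by the well-definedness, Lemma \ref{lem:norm-ind-by-iota}(a) gives $\varphi^{(m)}(x)\in \BM_m(T)_+$. Finally extend by continuity to $X_\os$, using that $T$ is complete and its cones are closed.

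\textbf{(b)} Work inside $\N(Y^*)$. Since $V$ is a dual operator system, Lemma \ref{lem:mu-compl-ord-mono} makes $\mu_V$ a completely isometric weak$^*$-homeomorphic complete order embedding. For $\omega\in \Morc_w(V,\BM_n)$, the composite $\omega\circ\psi$ lies in $\Morc_w(Y^*,\BM_n)$. This gives a normal $^*$-homomorphism-like coordinate map $\Psi:\N(Y^*)\to \N(V)$, defined on coordinates by $\Psi(g)_\omega := g_{\omega\circ\psi}$. It is a weak$^*$-continuous unital $^*$-homomorphism with $\Psi\circ\mu_{Y^*} = \mu_V\circ\psi$. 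Restrict $\Psi$ to the weak$^*$-closure $Y^\rd$. The image lies in the weak$^*$-closure of $\mu_V(V)$, which equals $\mu_V(V)$ by Lemma \ref{lem:image-weak-st-cont-bdd-below}. Set $\overline\psi := \mu_V^{-1}\circ\Psi|_{Y^\rd}$; it is weak$^*$-continuous by the weak$^*$-homeomorphism statement. Uniqueness follows from weak$^*$-density of $\iota_{Y^*}(Y^*)$.

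\textbf{(c)} Apply (b) with $Y=T$, $V=S^\rd$ and $\psi:=\iota_{S^*}\circ\varphi^*$. This $\psi$ is weak$^*$-continuous, completely positive and completely contractive, since $\varphi^*$ is and $\iota_{S^*}=\mu_{S^*}$ is.

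\textbf{(d)} This is the step I expect to be the main obstacle: it requires a quantitative decomposition of $Y$ into positives with norm control, an open-mapping argument. Consider the map $(B^+_Y)^4\to Y$, $(a,b,c,d)\mapsto a-b+i(c-d)$. If $Y$ has a generating cone, the Baire category theorem gives a constant $K$ such that every $y\in B_Y$ is $\sum_k i^k y_k$ with $y_k\in K B^+_Y$. Then for $f\in Y^*$,
$$\|f\|\le 4K\sup_{y\in B^+_Y}|f(y)| \le 4K\|\mu_{Y^*}(f)\|,$$
so $\iota_{Y^*}$ is bounded below. For the converse, bounded below together with Lemma \ref{lem:image-weak-st-cont-bdd-below} makes the image weak$^*$-closed. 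A Hahn--Banach separation argument then shows that $Y$ equals the closed span of $Y_+$, and in fact the span itself: the estimate yields that the closed absolutely convex hull of $B^+_Y$ contains a ball, and a standard series argument removes the closure. Once $\iota_{Y^*}$ is bounded below, Lemma \ref{lem:image-weak-st-cont-bdd-below} gives that $\iota_{Y^*}(Y^*)$ is weak$^*$-closed, hence equal to $Y^\rd$, and that $\iota_{Y^*}$ is a weak$^*$-homeomorphism onto it. Complete order isomorphism is \eqref{eqt:iota-inv=j-inv}.

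\textbf{(e)} Compose $\varphi$ with the identity $\iota_T=j_T$, which is completely isometric, and compare norms. We have $\|j_X^{(m)}(x)\|\le\|x\|$. Conversely, every $\omega\in\Morc(T,\BM_n)$ pulls back to $\omega\circ\varphi\in\Morc(X,\BM_n)$, so $\|x\| = \|\varphi^{(m)}(x)\| \le \|j_X^{(m)}(x)\|$. Hence $j_X$ is completely isometric, and Lemma \ref{lem:norm-ind-by-iota}(b) applies.

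\textbf{(f)} Take $T\subseteq\CL(\KH)$ and its universal $C^*$-algebra. Then $T^{**}$ embeds weak$^*$-homeomorphically, completely isometrically and order-embeddingly into $C^*(T)^{**}$, giving a dual operator system. Every $f\in T^*$ extends by Hahn--Banach and Wittstock to a functional on a $C^*$-algebra. Jordan decomposition writes that extension as a combination of positive functionals, and restricting gives $T^*$ a generating cone.

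\textbf{(g)} $\iota_{T^*}$ is a complete order isomorphism onto $Y^\rd$ by (d), and it carries $T^*_+$ onto positives. Using (f), the space $T^\rd=\iota_{T^*}(T^*)$ is spanned by positives.
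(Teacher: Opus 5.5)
\textbf{The main gap is in (f).} The bidual of the inclusion $T\subseteq C^*(T)$ is a weak$^*$-homeomorphic, completely positive complete isometry. For non-unital $T$, however, it need not be an order embedding. That would require every completely positive map $T\to\BM_n$ to be a norm limit of restrictions of completely positive maps on $C^*(T)$, and Arveson's extension theorem is unavailable without a unit.

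A concrete counterexample: let $T=\{f\in C[0,1]:\int_0^1 f\,dt=0\}$, so that $C^*(T)=C[0,1]$ by Stone--Weierstrass. The diagonal entries of a positive matrix over $T$ are non-negative continuous functions with zero integral, hence zero. So $\BM_n(T)_+=\{0\}$, every self-adjoint map on $T$ is completely positive, and therefore $\BM_n(T^{**})_+=\{0\}$. Yet $T^{**}\cong T^{\perp\perp}=\{x\in C[0,1]^{**}: x(\lambda)=0\}$, with $\lambda$ Lebesgue measure, contains the non-zero positive element $\chi_{\{1/2\}}$. So your embedding does not carry the dual matrix cone of $T^{**}$, and you have not shown that $T^{**}$, with its own structure, is a dual operator system.

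The paper avoids this by using (e), which needs only a completely positive complete isometry. Since $\Phi^{**}:T^{**}\to\CL(\KH)^{**}$ is one, $j_{T^{**}}$ is completely isometric. Then $\mu_{T^{**}}$ is completely isometric by \eqref{eqt:iota=j}, and Lemma~\ref{lem:mu-compl-ord-mono} applies. Alternatively, you could embed $T$ into $\M(T)$, where every element of $\Morc(T,\BM_n)$ is a restricted point evaluation; there your bidual argument does go through by a bipolar argument. Your Jordan-decomposition proof that $T^*$ has a generating cone is correct, and more elementary than the paper's deduction from (d).

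\textbf{Two smaller gaps.} In (a), closedness of the cones of $T$ does not by itself give complete positivity of the continuous extension to $X_\os$. You would need $\BM_m(X\check{\ })_+$ to be dense in $\BM_m(X_\os)_+$, which is Lemma~\ref{lem:compl}; the paper proves that later using (a) and flags it as non-trivial. The fix is to run your evaluation argument directly on $z\in\BM_m(X_\os)_+$, since point evaluations are continuous $^*$-homomorphisms on $\M(X)$. This is what the paper's $^*$-homomorphism $\widehat\varphi:\M(X)\to\M(T)$ does, and what you yourself do in (b).

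In the converse of (d), your Hahn--Banach plus series argument needs $\|f\|\le C\sup_{y\in B^+_Y}|f(y)|$. Bounded-belowness only controls $\|\mu_{Y^*}(f)\|$, which is a supremum over $B^+_{\BM_n(Y)}$ for all $n$. The missing step, which is the heart of the paper's proof, is compression to level one. For self-adjoint $f$, the norm of the self-adjoint matrix $[f(y_{ij})]$ is attained at a unit vector $\xi$, and $\xi^*[y_{ij}]\xi\in B^+_Y$. For general $f$, use real and imaginary parts.

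With that step added, your bipolar-plus-series route is a valid alternative to the paper's pre-adjoint surjectivity argument. Your Baire argument for the forward direction is equivalent to the paper's open-mapping argument applied to $\Psi$. Parts (b), (c), (e) and (g) essentially follow the paper.
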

	\begin{proof}
		(a) For each $n\in \BN$, compositions with  $\varphi$ induce a continuous map $\varphi_n:\Morc(T,\BM_n)\to \Morc(X,\BM_n)$. 
		From these maps, we have a unital $^*$-homomorphism 
		$$\widehat{\varphi}:\M(X)\to \M(T)$$ 
		satisfying  $\widehat{\varphi}(j_X(X))\subseteq j_T(T)$.
		Moreover, parts (a) and (b) of Lemma \ref{lem:norm-ind-by-iota} tell us that $j_T:T\to  \M(T)$ is a completely isometric complete order monomorphism. 
		Hence, $\widehat{\varphi}(X_\os) \subseteq j_T(T)$ (as $j_T(T)$ is norm closed), and $\varphi_\os: = j_T^{-1}\circ \widehat{\varphi}|_{X_\os}$ will satisfy the requirement.   
		The uniqueness comes from the norm density of $j_X(X)$ in $X_\os$. 
		
		\smnoind
		(b) Lemmas \ref{lem:image-weak-st-cont-bdd-below} and \ref{lem:mu-compl-ord-mono} as well as Relation \eqref{eqt:iota-inv=j-inv} tell us that $\mu_V$ is a weak$^*$-homeomorphic completely isometric complete order isomorphism from $V$ onto $\mu_V(V)$. 
		As in the argument of part (a) above, $\psi$ induces a unital normal $^*$-homomorphism from $ \N(Y^*)$ to $ \N(V)$ sending $ \mu_{Y^*}(Y^*)$ to $\mu_V(V)$.
		The conclusion then follows from a similar line of argument as in part (a) above (note that the uniqueness comes from the weak$^*$-density of $\mu_{Y^*}(Y^*)$ in $Y^\rd$).

		\smnoind
		(c) This part follows from part (b) above (because Notation \ref{ntn:predual-map}(c) implies that $S^\rd$ is a dual operator system). 
		
		\smnoind
		(d) Suppose that $Y$ has a generating cone. 
		The weak$^*$-continuity of $\mu_{Y^*}:Y^*\to  \N(Y^*)$ produces a map $\Psi\in \Morc\big( \N(Y^*)_*, Y\big)$ satisfying $\mu_{Y^*} = \Psi^*$. 
		Let $v\in B^+_Y$. 
		Define $\omega_v\in  \N(Y^*)_*$ such that $\omega_v$ vanishes on ${\bigoplus}_{n\geq 2}^{\ell^\infty}\, \ell^\infty\big(B^+_{\BM_n(Y)};\BM_n\big)$ and that 
		\begin{equation*}
			\omega_v(\phi) = \phi(v)\qquad (\phi\in \ell^\infty(B^+_Y;\BC)). 
		\end{equation*}
		It is not hard to check that $\Psi(\omega_v) =v$. 
		The generating cone assumption implies that $\Psi$ is surjective and hence is an open map.
		Thus, $\mu_{Y^*} = \Psi^*$ is bounded below. 
		
		Conversely, assume that $\iota_{Y^*}$ is bounded below.
		Let $\Theta:  \N(Y^*)\to \ell^\infty(B^+_Y, \BC)$ be the truncation map, which is a surjective normal unital $^*$-homomorphism. 
		Set 
		$$\Phi:= \Theta \circ \iota_{Y^*}|_{Y^*_\sa}: Y^*_\sa  \to \ell^\infty(B^+_Y;\BR).$$
		The bounded below assumption implies that there exists $\kappa > 0$ with 
		$$\|\iota_{Y^*}(f)\| > \kappa \|f\| \qquad (f\in Y^*_\sa).$$
		Consider  $f\in Y^*_\sa$ with $\|f\| = 1$. 
		Since $\|\iota_{Y^*}(f)\| > \kappa $, one can find $m\in \BN$ and $\psi\in \Morc_w(Y^*,\BM_m)$ satisfying
		$$\|\psi(f)\| > \kappa.$$
		As $\psi(f)\in (\BM_m)_\sa$, there is $\xi\in \BC^m$ such that  $\|\xi\| =1$ and $\|\psi(f)\| = |\langle \psi(f)\xi, \xi\rangle|$. 
		Define $\psi_\xi:Y^*\to \BC$ by 
		$$\psi_\xi(g) := \langle \psi(g)\xi, \xi\rangle \qquad (g\in Y^*).$$
		Then $\psi_\xi\in \Morc_w(Y^*,\BC) = B^+_Y$, and 
		$\kappa < \|\psi(f)\| = |\psi_\xi(f)| \leq \|\Phi(f)\|$. 	
		Consequently, $\Phi$ is bounded below. 
		Lemma \ref{lem:image-weak-st-cont-bdd-below} then implies that $\Phi(Y^*_\sa)$ is a weak$^*$-closed subspace of $\ell^\infty(B^+_Y;\BR)$. %and $\Phi$ is a weak$^*$-homeomorphism from $Y_\sa^*$ to $\Phi(Y_\sa^*)$. 
		Denote 
		$$E:= \big\{\omega\in \ell^1(B^+_Y;\BR): \omega(\Phi(Y^*_\sa)) = \{0\} \big\}.$$ 
		Then $E$ is a closed subspace of $\ell^1(B^+_Y;\BR)$ such that $\Phi(Y^*_\sa) \cong (\ell^1(B^+_Y;\BR)/E)^*$ and the restriction of $\sigma\big(\ell^\infty(B^+_Y;\BR), \ell^1(B^+_Y;\BR)\big)$ on $\Phi(Y^*_\sa)$ is $\sigma\big(\Phi(Y^*_\sa),\ell^1(B^+_Y;\BR)/E\big)$. 
		As $\Phi:  Y^*_\sa\to \Phi(Y^*_\sa)$ is a  weak$^*$-homeomorphism (see Lemma \ref{lem:image-weak-st-cont-bdd-below}), we learn that $\Phi_*$ induces a Banach space isomorphism from $\ell^1(B^+_Y;\BR)/E$ onto $Y_\sa$, which gives $\Phi_*\big(\ell^1(B^+_Y;\BR)\big) = Y_\sa$. 
		Since $\Phi$ is positive, the map $\Phi_*: \ell^1(B^+_Y;\BR)\to Y_\sa$ is a positive surjection. 
		From this, together with $\ell^1(B^+_Y;\BR) = \ell^1(B^+_Y;\BR)_+ - \ell^1(B^+_Y;\BR)_+$, we see that $Y_\sa = Y_+ - Y_+$. 
		These verify the asserted equivalence. 
		
		For the second statement, assume that $Y$ has a generating cone.
		We learn from Lemma \ref{lem:image-weak-st-cont-bdd-below} and the first statement that $\iota_{Y^*}:Y^*\to \iota_{Y^*}(Y^*)$ is a weak$^*$-homeomorphism and thus $\iota_{Y^*}(Y^*)$ is weak$^*$-closed in $\N(Y^*)$. 
		This gives $\iota_{Y^*}(Y^*) = Y^\rd$. 
		Moreover, Relation \eqref{eqt:iota-inv=j-inv} tells us that  $\iota_{Y^*}:Y^*\to \iota_{Y^*}(Y^*)$ is a complete order isomorphism.

		\smnoind
		(e) Since $\varphi_\os\circ \iota_X = \varphi$ (see part (a) above) and $\varphi_\os$ is a complete contraction, we know that $\iota_X$ is a complete isometry. 
		The required conclusion then follows from Lemma \ref{lem:norm-ind-by-iota}(b).     
		
		\smnoind
		(f) By considering the bidual of a completely isometric complete order monomorphism $\Phi:T\to \CL(\KH)$, we learn from part (e) above that $T^{**}$ is an operator system.
		Now, it follows from Lemma \ref{lem:norm-ind-by-iota}(b) that $\iota_{T^{**}}$ is a complete isometry (see Notation \ref{ntn:predual-map}(b)).  
		Hence, Lemma \ref{lem:mu-compl-ord-mono}  (see also Notation \ref{ntn:predual-map}(c)) implies that $T^{**}$ is a dual operator system.
		
		On the other hand, as $\iota_{T^{**}}$ is completely isometric, we learn from part (d) above that $T^*$ has a generating cone.

\smnoind		
(g) Part (d) above implies that $\iota_{T^*}: T^* \to T^\rd$ is a complete order isomorphism. 
Part (f) above then ensures that $T^\rd$ has a generating cone.
	\end{proof}
	
	%\medskip
	
	Part (a) above means that $X_\os$ is the ``universal complete operator system envelope'' of $X$, and part (b) means that $Y^\rd$ is the ``universal dual operator system envelope'' of $Y^*$, while part (c) implies that the construction $S\mapsto S^\rd$ is a contravariant functor, which was called the  ``dual functor'' in \cite{JN2}.
	
%\medskip

\begin{cor}\label{cor:compl-op-sys}
Let $S$ be an operator system, and $\tilde S$ be its operator space completion, equipped with the induced involution. 
If we define $\BM_\infty(\tilde S)_+$ to be the norm-closure of $\BM_\infty(S)_+$ in $\BM_\infty(\tilde S)$, then $\tilde S$ becomes an operator system. 
\end{cor}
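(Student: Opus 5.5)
The plan is to realize $\tilde S$ concretely as a closed self-adjoint subspace of some $\CL(\KH)$, and then to identify the matrix cone defined in the statement with the cone induced from $\CL(\KH)$. Since $S$ is an operator system, there is a completely isometric complete order monomorphism $\Lambda:S\to\CL(\KH)$. Because $\Lambda$ is a complete isometry, each amplification $\Lambda^{(n)}$ extends uniquely to an isometry on $\BM_n(\tilde S)$. These extensions are compatible, so we obtain a complete isometry $\tilde\Lambda:\tilde S\to\CL(\KH)$ whose image is the norm closure $\overline{\Lambda(S)}$.

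This closure is a self-adjoint subspace. Moreover, $\tilde\Lambda$ intertwines the extended involution on $\tilde S$ with the adjoint, because both are continuous and agree on the dense subspace $S$. Hence $\tilde\Lambda(\tilde S)$, with the matrix norms and matrix cones inherited from $\CL(\KH)$, is an operator system. We can therefore transport its structure to $\tilde S$. It remains to check that the transported matrix cone coincides with the norm closure of $\BM_\infty(S)_+$. Alternatively, we can first verify that $\tilde S$ with the stated cone is a SMOS, observe that $\tilde\Lambda$ is completely positive and completely isometric, and then appeal to Lemma \ref{lem:JN2}(e) with $T=\overline{\Lambda(S)}$, a complete operator system.

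The SMOS axioms for the stated cone are routine. The set is norm closed by definition and sits inside the self-adjoint part. The compression condition $\alpha^*x\alpha+\beta^*y\beta$ passes to closures by continuity, and the involution is isometric by density. Complete positivity of $\tilde\Lambda$ also follows by continuity: $\tilde\Lambda^{(n)}$ maps $\BM_n(S)_+$ into the closed cone $\BM_n(\CL(\KH))_+$, hence it maps the closure there as well. Then Lemma \ref{lem:JN2}(e) shows that $\tilde S$ is an operator system. A subtlety remains, because the definition of an operator system requires a complete \emph{order monomorphism}, which is exactly what Lemma \ref{lem:JN2}(e) packages via Lemma \ref{lem:norm-ind-by-iota}.

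The main obstacle is the reverse cone inclusion hidden in the previous step. We must show that if $x\in\BM_n(\tilde S)$ has $\tilde\Lambda^{(n)}(x)\ge0$, then $x$ is a limit of positive elements of $\BM_n(S)$. The route through Lemma \ref{lem:JN2}(e) avoids proving this directly, since that lemma only needs complete positivity plus complete isometry. If a direct argument is wanted instead, I would try the following approximation. Take $x_k\in\BM_n(S)$ with $x_k\to x$, and replace $x_k$ by its self-adjoint part. Then $\Lambda^{(n)}(x_k)\ge -\epsilon_k$ with $\epsilon_k\to0$, and we would like to add a small positive correction lying in $\BM_n(S)_+$. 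In the non-unital setting no such correction is available, so this direct route may fail. The cleaner path is therefore to rely on Lemma \ref{lem:JN2}(e) together with Lemma \ref{lem:norm-ind-by-iota}(a),(b), applied to $X=\tilde S$ with the given cone.
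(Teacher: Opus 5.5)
Your final route is correct and is essentially the paper's proof. The paper extends the complete isometry $j_S:S\to\M(S)$, where you use $\Lambda:S\to\CL(\KH)$, by continuity to $\tilde S$. It notes that the extension is completely positive because $\BM_\infty(\tilde S)_+$ is the closure of $\BM_\infty(S)_+$, and concludes with Lemma \ref{lem:JN2}(e). Your caution about the reverse cone inclusion is well placed, since the paper itself flags that density question as open in general, and you are right that Lemma \ref{lem:JN2}(e) sidesteps it: the order embedding it produces is $j_{\tilde S}$, not $\tilde\Lambda$.
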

\begin{proof}
Let $\tilde j_S: \tilde S\to \M(S)$ be the continuous extension of $j_S:S\to \M(S)$. 
As $j_S$ is a complete isometry (see Lemma \ref{lem:norm-ind-by-iota}(b)), we know that $\tilde j_S$ is a complete isometry.
Moreover, the definition of $\BM_\infty(\tilde S)_+$ ensures that $\tilde j_S$ is completely positive.
Now, the conclusion follows from Lemma \ref{lem:JN2}(e).
\end{proof}

%\medskip

\begin{cor}\label{cor:Y*-os}
	If $Y$ is a complete SMOS with a generating cone, then $j_{Y^*}(Y^*) = (Y^*)_\os$, and $Q_{Y^*}|_{(Y^*)_\os}$ is a completely isometric complete order isomorphism from $(Y^*)_\os$ onto $Y^\rd$. 
\end{cor}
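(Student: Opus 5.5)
The plan is to show that $j_{Y^*}(Y^*)$ is already norm-closed in $\M(Y^*)$; the second assertion then comes from Lemma \ref{lem:Q} together with Lemma \ref{lem:JN2}(d).

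\emph{Norm-closedness.} Because $Y$ has a generating cone, Lemma \ref{lem:JN2}(d) says $\mu_{Y^*}=\iota_{Y^*}$ is bounded below on $Y^*$. By \eqref{eqt:iota=j}, with $n=1$, we have $\|j_{Y^*}(f)\| = \|\mu_{Y^*}(f)\|$ for every $f\in Y^*$. Hence $j_{Y^*}$ is also bounded below.

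Since $Y^*$ is a Banach space, a bounded linear map that is bounded below has closed range. So $j_{Y^*}(Y^*)$ is norm-closed in $\M(Y^*)$. As $(Y^*)_\os$ is by definition the norm-closure of $(Y^*)\check{\ } = j_{Y^*}(Y^*)$ (Notation \ref{ntn:predual-map}(a)), we get $(Y^*)_\os = j_{Y^*}(Y^*)$.

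\emph{Second assertion.} Lemma \ref{lem:Q} states that $Q_{Y^*}$ restricted to $j_{Y^*}(Y^*)$ is a completely isometric complete order isomorphism onto $\mu_{Y^*}(Y^*)$. By the second statement of Lemma \ref{lem:JN2}(d), $\iota_{Y^*}(Y^*)=\mu_{Y^*}(Y^*)$ is weak$^*$-closed, so it equals $Y^\rd$. Combining these, $Q_{Y^*}|_{(Y^*)_\os}$ is a completely isometric complete order isomorphism from $(Y^*)_\os$ onto $Y^\rd$.

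The only step needing care is the transfer of the lower bound from $\mu_{Y^*}$ to $j_{Y^*}$. This is exactly what the norm equality \eqref{eqt:iota=j} supplies, so no new estimate is required. One should also confirm that the matrix cone of $(Y^*)_\os$, induced from $\M(Y^*)$, coincides with that of the subsystem $j_{Y^*}(Y^*)$ appearing in Lemma \ref{lem:Q}. This holds trivially once the two sets are equal.
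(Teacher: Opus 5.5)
Your proof is correct and follows essentially the paper's route. You get bounded-belowness of $\mu_{Y^*}$ from Lemma \ref{lem:JN2}(d), transfer it to $j_{Y^*}$ to obtain closed range, and then combine Lemma \ref{lem:Q} with $\mu_{Y^*}(Y^*)=Y^\rd$; the only cosmetic difference is that you transfer the lower bound through the norm identity \eqref{eqt:iota=j}, whereas the paper uses the factorization $\mu_{Y^*}=Q_{Y^*}\circ j_{Y^*}$ with $Q_{Y^*}$ contractive, and either works.
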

\begin{proof}
Since $Y^\rd$ is norm closed in $\N(Y^*)$, Relation \eqref{eqt:mu=Q-j} produces the following commuting diagram 
\begin{equation*}
	\xymatrix{
		& Y^* \ar[d]_{j_{Y^*}} \ar[r]^{\mu_{Y^*}} & Y^\rd\\
		& (Y^*)_\os \ar[ur]_{Q_{Y^*}|_{(Y^*)_\os}} 
	}
\end{equation*} 
By Lemma \ref{lem:JN2}(d), the map $\mu_{Y^*}:Y^*\to Y^\rd$ is a weak$^*$-homeomorphic complete order isomorphism.
The above diagram then tells us that $j_{Y^*}:Y^*\to (Y^*)_\os$ is bounded below and thus $j_{Y^*}(Y^*)$ is norm-closed. 
This gives the first claim. 
The second claim follows from the first one, together with Lemma \ref{lem:Q}. 
\end{proof}

%\medskip

In the case when the complete SMOS $Y$ does not have a generating cone, we know  from Lemma \ref{lem:Q} that the completely positive map $Q_{Y^*}|_{(Y^*)_\os}: (Y^*)_\os\to Y^\rd$ is completely isometric. 
However, we do not know whether it is a complete order monomorphism. 
This issue is related to the following question: 
\begin{quote}
{\it when $T$ is a dense  operator subsystem of a complete operator system $S$, will the matrix cone of $T$ be dense in the matrix cone of $S$?}
\end{quote}
Note that this question has a positive answer when $T$ is unital or is a dense $^*$-subalgebra of a $C^*$-algebra. 
The following tells us that this question has a positive answer when $T = X\check{\ }$ and $S= X_\os$ for a SMOS $X$. 
	
	%\medskip
	
	\begin{lem}\label{lem:compl}
		If $X$ is a SMOS, then $j_X^{(\infty)}\big(\BM_\infty(X)_+\big) = \BM_\infty\big(j_X(X)\big)\cap \BM_\infty\big(\M(X)\big)_+$ and is norm-dense in $\BM_\infty(X_\os)_+$. 
	\end{lem}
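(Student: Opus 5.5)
The plan has two parts: first the equality, which comes straight from Lemma \ref{lem:norm-ind-by-iota}(a), and then the density, which I would prove by a Hahn--Banach separation argument combined with the fact that the matrix states of $X$ are exactly the coordinates of $\M(X)$.

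\textbf{The equality.} The inclusion ``$\subseteq$'' is clear, since $j_X$ is completely positive. For ``$\supseteq$'', take $x\in\BM_n(X)$ with $j_X^{(n)}(x)\in\BM_n(\M(X))_+$. Then $j_X^{(n)}(x)$ is self-adjoint. As $j_X$ is $^*$-preserving, $j_X^{(n)}(x)=j_X^{(n)}(x^*)$. Hence $y:=\tfrac12(x+x^*)\in\BM_n(X)_\sa$ satisfies $j_X^{(n)}(y)=j_X^{(n)}(x)\ge0$. Lemma \ref{lem:norm-ind-by-iota}(a) then gives $y\in\BM_n(X)_+$, so $j_X^{(n)}(x)=j_X^{(n)}(y)\in j_X^{(n)}(\BM_n(X)_+)$. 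Injectivity of $j_X$ is not needed here.

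\textbf{Density: separation step.} Fix $n$ and let $C$ be the norm closure of $j_X^{(n)}(\BM_n(X)_+)$ in $\BM_n(X_\os)$. Clearly $C\subseteq\BM_n(X_\os)_+$, since the latter is closed in $\BM_n(\M(X))$. Suppose $a\in\BM_n(X_\os)_+\setminus C$. The set $C$ is a closed convex cone, so Hahn--Banach gives a bounded functional $f$ on $\BM_n(X_\os)$ with $\mathrm{Re}\, f\ge0$ on $C$ and $\mathrm{Re}\, f(a)<0$. Both $C$ and $a$ are self-adjoint, so I would replace $f$ by its hermitian part $\tfrac12(f+f^*)$, where $f^*(z)=\overline{f(z^*)}$. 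We may then assume $f$ is self-adjoint, $f\ge0$ on $C$, and $f(a)<0$.

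\textbf{Density: building a matrix state.} Consider $g:=f\circ j_X^{(n)}$, a bounded self-adjoint functional on $\BM_n(X)$ that is nonnegative on $\BM_n(X)_+$. Using the standard correspondence between functionals on $\BM_n(X)$ and linear maps $X\to\BM_n$, write $g(x)=\sum_{k,l}\langle\varphi(x_{k,l})e_l,e_k\rangle$. The compression axiom $\alpha^*x\alpha\in\BM_\infty(X)_+$ of a SMOS shows, exactly as in the classical Choi--Effros argument, that positivity of $g$ on $\BM_n(X)_+$ (and its behaviour under the amplifications/compressions) yields complete positivity of $\varphi$. Since $n$ is finite, $\varphi$ is completely bounded. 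After rescaling by a positive constant we may assume $\varphi\in\Morc(X,\BM_n)$.

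Now the evaluation map $\mathrm{ev}_\varphi:\M(X)\to\BM_n$ at the point $\varphi$ of the $n$-th summand is a $^*$-homomorphism, hence completely positive, and it satisfies $\mathrm{ev}_\varphi\circ j_X=\varphi$. Let $h$ be the functional on $\BM_n(X_\os)$ defined from $\mathrm{ev}_\varphi^{(n)}$ by the same pairing. Then $f$ and (a positive multiple of) $h$ agree on the dense subspace $j_X^{(n)}(\BM_n(X))$, so by continuity they agree everywhere. Since $a\ge0$ in $\BM_n(\M(X))$, we get $\mathrm{ev}_\varphi^{(n)}(a)\ge0$, hence $h(a)\ge0$, contradicting $f(a)<0$.

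\textbf{Expected obstacle.} The main care point is the step ``$g\ge0$ on $\BM_n(X)_+$ implies $\varphi$ completely positive'' in the non-unital SMOS setting. If a single $n$ does not suffice, I would instead separate using functionals built from all compressions $\alpha^*(\cdot)\alpha$, or equivalently apply the separation in $\BM_m$ for suitable $m$, so that the induced map is CP at every matrix level.
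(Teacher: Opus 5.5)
Your proof is correct. The equality part matches the paper's one-line appeal to Lemma \ref{lem:norm-ind-by-iota}(a), and your symmetrization $y=\tfrac12(x+x^*)$, which avoids needing injectivity of $j_X$, is a fine way to spell it out. The density part follows a genuinely different route from the paper.

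The paper does not separate anything. It lets $Z$ be the completion of $j_X(X)$, with cone the closure of $j_X^{(\infty)}(\BM_\infty(X)_+)$. It uses Corollary \ref{cor:compl-op-sys} to see that $Z$ is a complete operator system. It then applies the universal property Lemma \ref{lem:JN2}(a) to $j_X\in\Morc(X,Z)$. The resulting $(j_X)_\os$ is the identity map and is completely positive, so the identity $Z\to X_\os$ is a complete order isomorphism.

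You instead work at a fixed level $n$:
\begin{enumerate}
\item You separate $a$ from $C$ inside $\BM_n(X_\os)$ (correctly there, not in $\BM_n(\M(X))$).
\item You turn the pulled-back functional into a CP map $\varphi:X\to\BM_n$, i.e.\ a point of the $n$-th summand of $\M(X)$.
\item You use density of $j_X^{(n)}(\BM_n(X))$ in $\BM_n(X_\os)$ to force $f$ to be a positive multiple of $z\mapsto\langle \mathrm{ev}_\varphi^{(n)}(z)\eta,\eta\rangle$ with $\eta=(e_1,\dots,e_n)$, which is nonnegative at $a$.
\end{enumerate}

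The paper's version is shorter because it reuses machinery already in place. Yours is self-contained and essentially unpacks that machinery: the fact behind Lemma \ref{lem:norm-ind-by-iota} and Lemma \ref{lem:JN2}(a), that matrix cones are detected by CCP maps into matrix algebras, is itself a separation argument of exactly your type.

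Your ``expected obstacle'' does not arise; the single level $n$ suffices. Take $x\in\BM_m(X)_+$ and $\xi=(\xi_1,\dots,\xi_m)\in(\BC^n)^m$, and define $\alpha\in\BM_{m,n}$ by $\xi_j=\sum_l\alpha_{jl}e_l$. Then $\langle\varphi^{(m)}(x)\xi,\xi\rangle=g(\alpha^*x\alpha)\ge 0$. This holds because $\alpha^*x\alpha\in\BM_n(X)_+$ by the SMOS compression axiom, after padding $\alpha$ with zeros to a square matrix. So no fallback is needed.
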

	\begin{proof}
The equality $j_X^{(\infty)}\big(\BM_\infty(X)_+\big) = \BM_\infty\big(j_X(X)\big)\cap \BM_\infty\big(\M(X)\big)_+$ is a direct consequence of Lemma \ref{lem:norm-ind-by-iota}(a). 
Let us denote by $Z$ the completion of the operator space $j_X(X)$, equipped with the involution induced from the one on $j_X(X)$ and with its matrix cone $\BM_\infty(Z)_+$ being the norm-closure of $\BM_\infty(j_X(X))_+$ in $\BM_\infty(Z)$. 
By Corollary \ref{cor:compl-op-sys}, we know that $Z$ is a complete operator system. 

It suffices to show the identity map $\iota:Z \to X_\os$ is a complete order isomorphism. 
Indeed, it is clear that $\iota$ is completely positive, because the matrix cone $\BM_\infty(X_\os)_+$ is norm-closed and contains $\BM_\infty\big( (j_X(X))_+\big)$. 
Conversely, since $j_X\in \Morc(X, j_X(X)) \subseteq \Morc(X, Z)$, by Lemma \ref{lem:JN2}(a), there exists a unique map $(j_X)_\os\in \Morc(X_\os, Z)$ such that the following diagram
\begin{equation*}
	\xymatrix{
		& X \ar[d]_{j_X} \ar[r]^{j_X} & Z\\
		& X_\os \ar[ur]_{(j_X)_\os} 
	}
\end{equation*} 
commutes. This means that the completely positive map $(j_X)_\os$ is the identity map, and hence is the inverse of $\iota$. 
Therefore, $\iota$ is a complete order isomorphism. 
\end{proof}

%\medskip
	
%	The proof of Lemma \ref{lem:compl} above and Lemma \ref{lem:norm-ind-by-iota}(b) give a more direct way to describe $T_\os$ (without going through $j_T$)  when $T$ is an operator system:   
%	$T_\os$ is the completion of $T$, equipped with the induced involution such that  $\BM_\infty(T_\os)_+$ is the norm-closure of $\BM_\infty(T)_+$ in $\BM_\infty(T_\os)$. 

	%\medskip
	
	For a dual operator system $V$, we set
	\begin{equation*}\label{eqt:def-pred-oper-sys}
		V_\# := (V_*)_\os.
	\end{equation*}
	By Lemma \ref{lem:compl}, the matrix cone $\BM_\infty(\iota_{V_*}(V_*))_+ = j_{V_*}^{(\infty)}\big(\BM_\infty(V_*)_+\big)$ is norm dense in $\BM_\infty(V_\#)_+$. 
	
	%\medskip
	
	Observe that the canonical embedding $\kappa_{V_*}: V_* \to V^*$ gives another canonical way to associate a complete operator system with the SMOS $V_*$; namely,  the norm-closure of $\iota_{V^*}(\kappa_{V_*}(V_*))$ in $V^\rd\subseteq \N(V^*)$. 
	In the following, we will show that these two methods actually produce the same operator system. 
	
	%\medskip
	
	In fact, by Lemma \ref{lem:JN2}(a), there exists a unique completely positive complete contraction $\Gamma_{V_\#}: V_\# \to  V^\rd$ such that the diagram 
	\begin{equation}\label{eqt:iota-ext-Gamma-circ-iota}
		\xymatrix{
			& V_* \ar[d]_{\iota_{V_*}} \ar[r]^{\kappa_{V_*}} & V^* \ar[d]^{\iota_{V^*}}\\
			& V_\# \ar[r]^{\Gamma_{V_\#}} & V^\rd 
		}
	\end{equation}
	commutes. 
	Since $\Morc(V_*, \BM_n) = B^+_{\BM_n(V)} = \Morc_w(V^*, \BM_n)$ (see 
	\eqref{eqt:CCP-B+}), one has 
	$$ \M(V_*) \subseteq  \N(V^*).$$
	Thus, $\iota_{V_*}=j_{V_*}$ is the evaluation map from $V_*$ to $ \N(V^*)$.
	On the other hand, as in Notation \ref{ntn:predual-map}(c),  $\iota_{V^*}$ can be identified with the evaluation map $\mu_{V^*}:V^* \to  \N(V^*)$. 
	From these and the uniqueness of the map $\Gamma_{V_\#}$ that satisfies \eqref{eqt:iota-ext-Gamma-circ-iota}, we know that $\Gamma_{V_\#}$ is the restriction of the inclusion map $i:  \M(V_*)\to  \N(V^*)$ to the norm-closure of $\iota_{V_*}(V_*)$ in $ \M(V_*)$. 
	
	%\medskip
	
	Let us summarize the above as follows.

	%\medskip
	
	\begin{lem}\label{lem:predual-dual-os}
		Let $V$ be a dual operator system. 
		The map $\Gamma_{V_\#}\in \Morc(V_\#, V^\rd)$ as above is a completely isometric compete order monomorphism. 
	\end{lem}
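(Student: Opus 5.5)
The plan is to use the description of $\Gamma_{V_\#}$ obtained just before the statement: $\Gamma_{V_\#}$ is the restriction to $V_\#\subseteq \M(V_*)$ of the inclusion map $i:\M(V_*)\to \N(V^*)$. Here $\M(V_*)=\bigoplus_n^{\ell^\infty} C\big(\Morc(V_*,\BM_n);\BM_n\big)$ and $\N(V^*)=\bigoplus_n^{\ell^\infty}\ell^\infty\big(\Morc_w(V^*,\BM_n);\BM_n\big)$, and the index sets agree via \eqref{eqt:CCP-B+}, since both equal $B^+_{\BM_n(V)}$. So $i$ simply forgets that a function is continuous.

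The first step is to check that this identification is legitimate and that $i$ is an injective $^*$-homomorphism between $C^*$-algebras. Both algebras are $\ell^\infty$-direct sums of $\BM_n$-valued functions on the same sets, with pointwise operations and sup-norms. Hence $i$ is an injective unital $^*$-homomorphism.

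Next I use the standard fact that an injective $^*$-homomorphism between $C^*$-algebras is completely isometric. Its amplifications $i^{(n)}:\BM_n(\M(V_*))\to \BM_n(\N(V^*))$ are again injective $^*$-homomorphisms, hence isometric. This fact also gives
\[ i^{(n)}\big(\BM_n(\M(V_*))\big)\cap \BM_n(\N(V^*))_+ = i^{(n)}\big(\BM_n(\M(V_*))_+\big). \]
The reason is that positivity of a self-adjoint element is determined by its spectrum, which is preserved under an injective $^*$-homomorphism. Alternatively, one can argue directly: an element $[f_{kl}]$ of $\BM_m(C(K;\BM_n))$ is positive if and only if it is pointwise positive, and this pointwise condition is the same in $\ell^\infty(K;\BM_n)$.

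Finally I restrict to $V_\#$. The space $V_\#$ carries the operator system structure induced from $\M(V_*)$, and $V^\rd$ carries the structure induced from $\N(V^*)$. The image $\Gamma_{V_\#}(V_\#)=i(V_\#)$ lies in $V^\rd$, so membership in the matrix cone of $V^\rd$ is the same as positivity in $\N(V^*)$. Thus the restriction of $i$ is injective, completely isometric, and satisfies the cone equality required of a complete order monomorphism.

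The only delicate point is the justification that $\Gamma_{V_\#}$ really coincides with $i|_{V_\#}$ and lands in $V^\rd$. This follows from the uniqueness in Lemma~\ref{lem:JN2}(a), as explained before the statement, and from $V^\rd$ being norm closed and containing $\iota_{V^*}(\kappa_{V_*}(V_*))$.
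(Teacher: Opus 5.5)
Your proposal is correct and follows essentially the same route as the paper: you identify $\Gamma_{V_\#}$, via the uniqueness in Lemma~\ref{lem:JN2}(a), with the restriction to $V_\#$ of the inclusion $\M(V_*)\subseteq \N(V^*)$ (the index sets agree since both are $B^+_{\BM_n(V)}$), and the conclusion follows because this inclusion is an injective $^*$-homomorphism while the structures on $V_\#$ and $V^\rd$ are induced from $\M(V_*)$ and $\N(V^*)$. You simply make explicit the standard facts the paper leaves implicit, namely that injective $^*$-homomorphisms are completely isometric and reflect positivity, and that $V^\rd$ is norm closed and so contains $\Gamma_{V_\#}(V_\#)$.
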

	
	%\medskip

In the following, for a dual operator system $V$,  
$$\text{we will regard $\iota_{V_*}$ as a map in }\Morc (V_*, V_\#).$$
By Lemma \ref{lem:JN2}(b), one obtains a map $\overline{\iota_{V_*}^*}\in \Morc_w\big((V_\#)^\rd, V\big)$ such that 
\begin{equation}\label{eqt:iota-bar-iota}
	\xymatrix{
		& (V_\#)^* \ar[d]_{\iota_{(V_\#)^*}} \ar[r]^{\iota_{V_*}^*} & V\\
		& (V_\#)^\rd \ar[ur]_{\overline{\iota_{V_*}^*}} 
	}
\end{equation}
is a commuting diagram. 

	%\medskip
	
%	Our main results concern with the case when $V$ has a generating cone.
%	In this case, we have the following. 

	%\medskip
	
	\begin{lem}\label{lem:predual1}
		Let $V$ be a dual operator system. 
		
		\smnoind
		(a) The dual map $\iota_{V_*}^*\in \Morc((V_\#)^*, V)$ is a complete order monomorphism. 
		
		\smnoind
		(b) If $V_\#$ has a generating cone, then $\overline{\iota_{V_*}^*}:(V_\#)^\rd \to V$ is a complete order monomorphism. 
		
		\smnoind
		(c) Suppose that $V$ has a generating cone. 
		Then $\iota_{V_*}: V_* \to V_\#$ is a complete order isomorphism and  $V_\#$ has a generating cone. In particular, $V_\# = \iota_{V_*}(V_*)$.
	\end{lem}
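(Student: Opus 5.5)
For part (a) I would argue directly from the definitions. The map $\iota_{V_*}^*$ is completely positive and completely contractive because $\iota_{V_*}\in\Morc(V_*,V_\#)$. It is injective because $\iota_{V_*}(V_*)$ is norm-dense in $V_\#$.

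For the order condition, take $f\in\BM_n((V_\#)^*)_\sa$ with $(\iota_{V_*}^*)^{(n)}(f)\in\BM_n(V)_+$. Under the identification \eqref{eqt:Mn(X*)}, the element $(\iota_{V_*}^*)^{(n)}(f)$ is just $f\circ\iota_{V_*}\in\CB(V_*,\BM_n)$, and its positivity means that $f\circ\iota_{V_*}$ is completely positive on $V_*$. Hence $f^{(\infty)}$ is positive on $\iota_{V_*}^{(\infty)}(\BM_\infty(V_*)_+)$. By Lemma \ref{lem:compl} this set is norm-dense in $\BM_\infty(V_\#)_+$, so continuity of $f$ gives that $f$ is completely positive on $V_\#$. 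Thus $f\in\BM_n((V_\#)^*)_+$, which is the required complete order monomorphism property.

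For part (b), assume $V_\#$ has a generating cone. By Lemma \ref{lem:JN2}(d), applied with $Y=V_\#$, the map $\iota_{(V_\#)^*}:(V_\#)^*\to(V_\#)^\rd$ is a surjective complete order isomorphism. The commuting diagram \eqref{eqt:iota-bar-iota} then gives
\[
\overline{\iota_{V_*}^*}=\iota_{V_*}^*\circ\iota_{(V_\#)^*}^{-1}.
\]
This is a complete order isomorphism followed by a complete order monomorphism (by part (a)), hence a complete order monomorphism.

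For part (c), assume $V$ has a generating cone. The first step is to show that $V_*$ has a generating cone, which does not actually need the hypothesis on $V$. Since $V$ is a dual operator system, Lemma \ref{lem:mu-compl-ord-mono} says $\mu_V=\iota_V$ is completely isometric, in particular bounded below. Lemma \ref{lem:JN2}(d), applied with $Y=V_*$, then yields $(V_*)_\sa=(V_*)_+-(V_*)_+$.

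Next, Lemma \ref{lem:compl} shows that $\iota_{V_*}=j_{V_*}$ satisfies the cone equality $j_{V_*}^{(\infty)}(\BM_\infty(V_*)_+)=\BM_\infty(j_{V_*}(V_*))\cap\BM_\infty(\M(V_*))_+$. It is injective, since the functionals in $\Morc(V_*,\BC)=B^+_V$ span $V$ (generating cone) and therefore separate points of $V_*$. So $\iota_{V_*}$ is a complete order monomorphism onto its image. Once we know $\iota_{V_*}(V_*)=V_\#$, it follows that $\iota_{V_*}$ is a complete order isomorphism and that $V_\#$ inherits the generating cone of $V_*$.

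The remaining step, and the main obstacle, is to show that $\iota_{V_*}(V_*)$ is norm-closed, i.e. that $j_{V_*}$ is bounded below. It is enough to bound $\sup_{g\in B^+_V}|g(x)|$ from below by a multiple of $\|x\|$ for $x\in V_*$. I would prove an Andô-type decomposition: there is $c>0$ such that every $f\in B_{V_\sa}$ can be written $f=g-h$ with $g,h\in V_+$ and $\|g\|,\|h\|\le c$. To get it, write $V_\sa=\bigcup_n n(B^+_V-B^+_V)$. Each set $B^+_V-B^+_V$ is convex, symmetric and weak$^*$-compact (Lemma \ref{lem:unit-ball-weak-st-dense}(c)), hence norm-closed. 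The Baire category theorem in the Banach space $V_\sa$ then gives that some $n(B^+_V-B^+_V)$ contains a ball around $0$, which is the desired $c$. With this decomposition, $|f(x)|\le 2c\sup_{g\in B^+_V}|g(x)|$ for $f\in B_{V_\sa}$, and splitting a general $f\in B_V$ into real and imaginary parts gives $\|x\|\le 4c\,\|j_{V_*}(x)\|$. Hence $j_{V_*}(V_*)$ is closed and equals $V_\#$.
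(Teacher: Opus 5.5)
Your parts (a) and (b) are the paper's arguments: density of the image cone via Lemma \ref{lem:compl} for (a), and Lemma \ref{lem:JN2}(d) with $Y=V_\#$ plus \eqref{eqt:iota-bar-iota} for (b). Part (c) is also correct, but you argue it differently.

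For the bounded-below step, the paper proves no decomposition by hand. It applies Lemma \ref{lem:JN2}(d) with $Y=V$ to get that $\iota_{V^*}:V^*\to\N(V^*)$ is bounded below. It then reads off $\|\iota_{V_*}(x)\|\ge\|\Gamma_{V_\#}(\iota_{V_*}(x))\|=\|\iota_{V^*}(\kappa_{V_*}(x))\|$ from \eqref{eqt:iota-ext-Gamma-circ-iota}, using that $\Gamma_{V_\#}$ is contractive and $\kappa_{V_*}$ is isometric. Your Baire-category decomposition of $V_\sa$ is the open-mapping mechanism inside the proof of Lemma \ref{lem:JN2}(d), made explicit at the scalar level of $\M(V_*)$. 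It is self-contained and gives an explicit constant, but you leave one detail implicit. Lemma \ref{lem:unit-ball-weak-st-dense}(c) only gives weak$^*$-compactness of $B_V$, so you also need $V_+$ to be weak$^*$-closed. This holds, since positivity of a functional on $V_*$ is a pointwise condition, and it makes $B^+_V-B^+_V$ weak$^*$-compact.

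For the generating cone of $V_\#$, the paper dualizes. Once $\iota_{V_*}^*$ is a Banach space isomorphism, \eqref{eqt:iota-bar-iota} and the contractivity of $\overline{\iota_{V_*}^*}$ force $\iota_{(V_\#)^*}$ to be bounded below, and Lemma \ref{lem:JN2}(d) with $Y=V_\#$ finishes. You instead note, via Lemma \ref{lem:mu-compl-ord-mono} and Lemma \ref{lem:JN2}(d) with $Y=V_*$, that the predual of any dual operator system has a generating cone. You then transport it through the complete order isomorphism $\iota_{V_*}$.

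Your route is more direct and isolates a fact of independent interest, while the paper's is shorter given its existing diagrams and maps.
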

	\begin{proof}
		(a) Since $\iota_{V_*}(V_*)$ is norm dense in $V_\#$, the map $\iota_{V_*}^*$ is injective. 
		It remains to show that 
		\begin{equation*}\label{eqt:iota-V-*}
			(\iota_{V_*}^*)^{(\infty)}\big(\BM_\infty\big((V_\#)^*\big)_+\big) = (\iota_{V_*}^*)^{(\infty)}\big(\BM_\infty\big((V_\#)^*\big)_\sa\big)\cap \BM_\infty(V)_+.
		\end{equation*}
		In fact, the complete positivity of $\iota_{V_*}^*$ implies that the left hand side is a subset of the right hand side. 
		For the opposite inclusion, consider $u\in \BM_\infty((V_\#)^*)_\sa$ satisfying $(\iota_{V_*}^*)^{(\infty)}(u)\in \BM_\infty(V)_+$. 
		We have  
		$$\big( u, (\iota_{V_*})^{(\infty)}(\omega) \big) = \big((\iota_{V_*}^*)^{(\infty)}(u), \omega \big) \geq 0 \qquad (\omega\in \BM_\infty(V_*)_+).$$
		Since $(\iota_{V_*})^{(\infty)}(\BM_\infty(V_*)_+)$   is norm dense in $\BM_\infty(V_\#)_+$ (by Lemma \ref{lem:compl}), the displayed relation above implies that $u\in \BM_\infty\big((V_\#)^*)_+$. 
		
		\smnoind
		(b) Lemma \ref{lem:JN2}(d) implies that $\iota_{(V_\#)^*}: (V_\#)^* \to (V_\#)^\rd$ is a complete order isomorphism. 
		We then know from this, part (a) above and  \eqref{eqt:iota-bar-iota} that $\overline{\iota_{V_*}^*}$ is a complete order monomorphism.
		
		\smnoind
		(c) By Lemma \ref{lem:JN2}(d), we know that $\iota_{V^*}$ is bounded below. 
		Since $\kappa_{V_*}$ is completely isometric, \eqref{eqt:iota-ext-Gamma-circ-iota} implies that $\iota_{V_*}$ is bounded below. 
		Therefore, $\iota_{V_*}(V_*)$ is complete.
		This tells us that 
		$$V_\# = (V_*)_\os = \iota_{V_*}(V_*)$$ 
		and $\iota_{V_*}$ is a Banach space isomorphism from $V_*$ onto $V_\#$. 
		Thus, we conclude from Lemma \ref{lem:norm-ind-by-iota}(a) that $\iota_{V_*}: V_* \to V_\#$ is a complete order isomorphism. 
		Moreover, 
		$$\iota_{V_*}^*: (V_\#)^* \to V$$ 
		will be a Banach space isomorphism, and 
		we learn from \eqref{eqt:iota-bar-iota} that $\iota_{(V_\#)^*}$ is bounded below (because $\overline{\iota_{V_*}^*}$ is a complete contraction). 
		Therefore, Lemma \ref{lem:JN2}(d) tells us that $V_\#$ has a generating cone. 
	\end{proof}

	%\medskip
	
	Suppose that $T$ is a complete operator system.
	By Lemma \ref{lem:JN2}(f), we know that  $T^{**}$ is a dual  operator system.
	Hence, Lemma \ref{lem:JN2}(b) produces a map $\overline{\iota_{T^*}^*}\in \Morc_w(T^{\rd\rd},T^{**})$ with the diagram  
	\begin{equation}\label{eqt:iota-T*-*}
		\xymatrix{
			& (T^\rd)^* \ar[d]_{\iota_{(T^\rd)^*}} \ar[r]^{	\iota_{T^*}^*} & T^{**}\\
			& T^{\rd\rd} \ar[ur]_{\overline{\iota_{T^*}^*}} 
		}
	\end{equation}
	being commute. 
	Moreover, Lemma \ref{lem:JN2}(a) gives $(\iota_{T^*})_\#\in \Morc((T^{\rd})_\#,T)$ with 
	\begin{equation}\label{eqt:iota-T^d_*}
		\xymatrix{
			& (T^\rd)_* \ar[d]_{\iota_{(T^\rd)_*}} \ar[r]^{	(\iota_{T^*})_*} & T\\
			& (T^\rd)_\# \ar[ur]_{(\iota_{T^*})_\#} 
		}
	\end{equation}
	being a commuting diagram. 
	
	%\medskip
	
	%\medskip
	
	\begin{lem}\label{lem:equiv-tight}
		Let $T$ be a complete operator system with a generating cone. 
		
		\smnoind
		(a) $\iota_{(T^\rd)^*}$, $\iota_{T^*}^*$ and $\overline{\iota_{T^*}^*}$ are  weak$^*$-homeomorphic complete order isomorphisms.

		\smnoind
		(b) $(\iota_{T^*})_*$, $\iota_{(T^\rd)_*}$ and $(\iota_{T^*})_\#$ are complete order isomorphisms.	
		
		\smnoind
		(c) $T^{\rd\rd}$ and $T^{**}$ have generating cones. 
	\end{lem}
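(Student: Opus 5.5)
The plan is to start from the single fact that $\iota_{T^*}:T^*\to T^\rd$ is a weak$^*$-homeomorphic complete order isomorphism. Since $T$ has a generating cone, this is exactly what Lemma \ref{lem:JN2}(d) gives, applied with $Y=T$. Everything else will come from dualizing and predualizing this map and then using the commuting diagrams \eqref{eqt:iota-T*-*} and \eqref{eqt:iota-T^d_*}.

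First, the predual side, which gives part (b). Because $\iota_{T^*}$ is a weak$^*$-homeomorphic linear bijection $T^*\to T^\rd$, it is the adjoint of a Banach space isomorphism $(\iota_{T^*})_*:(T^\rd)_*\to T$; this follows from Lemma \ref{lem:image-weak-st-cont-bdd-below} or the open mapping theorem. Next I would show that $(\iota_{T^*})_*$ is a complete order isomorphism. Complete positivity of $(\iota_{T^*})_*$ and of its inverse can be tested by pairing against the matrix cones of $T^*$ and $T^\rd$, via the identifications \eqref{eqt:CCP-B+}.
\begin{itemize}
\item Under these identifications, positive elements of $\BM_n(T)$ are exactly the $x$ with $(f,x)\geq 0$ for all $f\in\BM_n(T^*)_+$. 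This uses that $T$ is an operator system, so its cone is determined by completely positive maps into $\BM_n$.
\item The analogous description holds for $(T^\rd)_*$, whose matrix cone is defined dually as $\Morc_w(T^\rd,\BM_n)$-type elements.
\item Since $\iota_{T^*}$ matches the cones exactly, so does $(\iota_{T^*})_*$.
\end{itemize}
It follows that $(T^\rd)_*$ has a generating cone. Lemma \ref{lem:predual1}(c), applied with $V=T^\rd$ (which has a generating cone by Lemma \ref{lem:JN2}(g)), then says that $\iota_{(T^\rd)_*}$ is a complete order isomorphism onto $(T^\rd)_\#$. Diagram \eqref{eqt:iota-T^d_*} finally forces $(\iota_{T^*})_\#=(\iota_{T^*})_*\circ\iota_{(T^\rd)_*}^{-1}$, which is a composition of complete order isomorphisms.

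Second, the dual side, which gives part (a). The map $\iota_{T^*}^*:(T^\rd)^*\to T^{**}$ is the adjoint of the bijection $\iota_{T^*}$. It is therefore a weak$^*$-homeomorphic linear bijection, and it is a complete order isomorphism because the inverse of $\iota_{T^*}$ is completely positive. Then I would handle $\iota_{(T^\rd)^*}$:
\begin{itemize}
\item Since $T^\rd$ has a generating cone (Lemma \ref{lem:JN2}(g)), its predual $(T^\rd)_*$ does too, by part (b) together with the fact that $T$ has one.
\item The cone of $(T^\rd)^*$ is $(T^\rd)_{*}^{**}$-related, so I would rather argue directly: $(T^\rd)^*\cong T^{**}$ via $\iota_{T^*}^*$, and $T^{**}$ has a generating cone by Lemma \ref{lem:JN2}(f) applied to $T^*$.
\item Hence $(T^\rd)^*$ has a generating cone, and Lemma \ref{lem:JN2}(d) with $Y=(T^\rd)^*$ gives that $\iota_{(T^\rd)^*}$ is a weak$^*$-homeomorphic complete order isomorphism onto $T^{\rd\rd}$.
\end{itemize}
Diagram \eqref{eqt:iota-T*-*} then gives $\overline{\iota_{T^*}^*}=\iota_{T^*}^*\circ\iota_{(T^\rd)^*}^{-1}$, which is again a weak$^*$-homeomorphic complete order isomorphism.

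Part (c) follows directly: $T^{**}$ has a generating cone by Lemma \ref{lem:JN2}(f), and $T^{\rd\rd}$ inherits one through the isomorphism $\overline{\iota_{T^*}^*}$. The main obstacle I expect is that Lemma \ref{lem:JN2}(f) gives a generating cone for $T^*$ but is stated for $T$, not for $T^{**}$ as used above. To make sure $T^{**}$ itself has a generating cone, I would take the bidual of a decomposition: every $x\in\BM_n(T^{**})_\sa$ is a weak$^*$-limit of bounded self-adjoint elements, each a difference of bounded positives by the open mapping property of $T_+\times T_+\to T_\sa$. A weak$^*$-compactness argument (Lemma \ref{lem:unit-ball-weak-st-dense}(b),(c)) then yields $x=a-b$ with $a,b\geq 0$ in $T^{**}$.
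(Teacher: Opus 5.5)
Your treatment of (b) is the paper's argument; your pairing check via \eqref{eqt:CCP-B+} and Lemma \ref{lem:norm-ind-by-iota}(a) just spells out why the preadjoint $(\iota_{T^*})_*$ is a complete order isomorphism. The step in (a) that handles $\iota_{(T^\rd)^*}$, however, is wrong as written. Lemma \ref{lem:JN2}(d) with $Y=(T^\rd)^*$ is a statement about $\iota_{(T^\rd)^{**}}:(T^\rd)^{**}\to((T^\rd)^*)^\rd$, not about $\iota_{(T^\rd)^*}:(T^\rd)^*\to T^{\rd\rd}$. For the latter map the relevant instance is $Y=T^\rd$, and the hypothesis is a generating cone in the \emph{predual} $T^\rd$. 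A generating cone in the dual $(T^\rd)^*$ is not that hypothesis and does not imply it in general. For instance, if a SMOS $Y$ has zero matrix cone, then every self-adjoint map $Y\to\BM_n$ is completely positive, so $Y^*_+=Y^*_\sa$ is generating, while $\iota_{Y^*}=0$. Separately, ``Lemma \ref{lem:JN2}(f) applied to $T^*$'' is not available, since $T^*$ need not be an operator system. Lemma \ref{lem:JN2}(f) gives a generating cone for $T^*$, not for $T^{**}$.

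The fix is already in your first bullet. $T^\rd$ is a complete operator system with a generating cone by Lemma \ref{lem:JN2}(g), so Lemma \ref{lem:JN2}(d) with $Y=T^\rd$ directly makes $\iota_{(T^\rd)^*}$ a weak$^*$-homeomorphic complete order isomorphism onto $T^{\rd\rd}$. The detour through $(T^\rd)^*\cong T^{**}$ can then be deleted; this is exactly what the paper does. The same observation removes the need for your bidual decomposition argument in (c). That argument can be made to work: the bounded decomposition $x=a-b$ in $T_\sa$ comes from the open mapping theorem applied to the summation map $\ell^1(B^+_T)\to T$, as in the proof of Lemma \ref{lem:JN2}(d), followed by Goldstine, Banach--Alaoglu and weak$^*$-closedness of the cone of $T^{**}$. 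The paper's route is shorter and runs in the opposite direction to yours. Lemma \ref{lem:JN2}(g) applied to $T^\rd$ gives the generating cone of $T^{\rd\rd}$, and the complete order isomorphism $\overline{\iota_{T^*}^*}$ from (a) carries it over to $T^{**}$.
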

	\begin{proof}
		By parts (d)  and (g) of Lemma \ref{lem:JN2}, the map $\iota_{T^*}:T^*\to T^\rd$ is a weak$^*$-homeomorphic complete order isomorphism and $T^\rd$ has a generating cone. 
		
		\smnoind
		(a) From the above and Lemma \ref{lem:JN2}(d), both $\iota_{T^*}^*$ and $\iota_{(T^\rd)^*}$ are  weak$^*$-homeomorphic complete order isomorphisms. 
		We then conclude from \eqref{eqt:iota-T*-*} that $\overline{\iota_{T^*}^*}$ is a weak$^*$-homeomorphic complete order isomorphism.

		\smnoind
		(b) Since $\iota_{T^*}:T^*\to T^\rd$ is a weak$^*$-homeomorphic complete order isomorphism, the map  $(\iota_{T^*})_*: (T^\rd)_*\to T$ is a complete order isomorphism. 
		Moreover, as the dual operator system $T^\rd$ has a generating cone, we learn from Lemma \ref{lem:predual1}(c) that $\iota_{(T^\rd)_*}: (T^\rd)_*\to (T^\rd)_\#$ is a complete order isomorphism, and hence so is 
		$(\iota_{T^*})_\#$ (because of  \eqref{eqt:iota-T^d_*}). 
		
		\smnoind
		(c) Lemma \ref{lem:JN2}(g) implies that $T^{\rd\rd}$ has a generating cone. 
		Hence,  $T^{**}$ also has a generating cone (as $\overline{\iota_{T^*}^*}$ is a complete order isomorphism by part (a) above). 
	\end{proof}
	
	%\medskip
	
	\begin{defn}\label{defn:d-dual}
		Let $T$ be a complete operator system with a generating cone. 
		Then $T$ is said to be \emph{tight} if the map $\overline{\iota_{T^*}^*}:T^{\rd\rd} \to T^{**}$ is completely isometric.
	\end{defn}

By Lemma \ref{lem:equiv-tight}(a), $T$ is tight if and only if one can identify $T^{\rd\rd}$ and $T^{**}$ as the same the dual operator system, via  $\overline{\iota_{T^*}^*}$. 
	%\medskip
	
	%\medskip
	
	The following is our first main result. 
	
	%\medskip
	
	\begin{thm}\label{thm:os-predual-tight}
		(a) If 	$V$ is a dual operator system with a generating cone, then $V_\#$ is tight. 
		
		\smnoind
		(b) Let $T$ be a complete operator system with a generating cone. 
		The following statements are equivalent. 
		\begin{enumerate}[(S1)]
			\item $T$ is tight. 
			
			\item $(\iota_{T^*})_\#:  (T^{\rd})_\#\to T$ is completely isometric.
			
			\item $(\iota_{T^*})_\#:  (T^{\rd})_\#\to T$ is a completely isometric complete order isomorphism.
		\end{enumerate}
	\end{thm}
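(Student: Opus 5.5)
The plan is to prove (b) first and then deduce (a) by verifying (S3) for $T=V_\#$.

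\textbf{Part (b).} By Lemma \ref{lem:equiv-tight}(b), $(\iota_{T^*})_\#$ is already a complete order isomorphism, so (S2)$\Leftrightarrow$(S3) is immediate. It remains to compare (S1) and (S2).

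Put $V:=T^\rd$. This is a dual operator system with a generating cone (Lemma \ref{lem:JN2}(g)), so $(T^\rd)_\#=\iota_{V_*}(V_*)$ by Lemma \ref{lem:predual1}(c). By Lemma \ref{lem:predual-dual-os}, the norm of $\iota_{V_*}^{(n)}(\omega)$, for $\omega\in\BM_n(V_*)$, equals the norm of $\iota_{V^*}^{(n)}(\kappa_{V_*}^{(n)}(\omega))$ in $T^{\rd\rd}$. On the other hand, the norm of $(\iota_{T^*})_*^{(n)}(\omega)$ in $T$ equals that of $\kappa_T^{(n)}((\iota_{T^*})_*^{(n)}\omega)$ in $T^{**}$. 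Diagrams \eqref{eqt:iota-T*-*} and \eqref{eqt:kappa} give
\[
\overline{\iota_{T^*}^*}\circ\iota_{(T^\rd)^*}\circ\kappa_{(T^\rd)_*}=\iota_{T^*}^*\circ\kappa_{(T^\rd)_*}=\kappa_T\circ(\iota_{T^*})_* .
\]
Hence (S1)$\Rightarrow$(S2) is immediate: an isometric $\overline{\iota_{T^*}^*}$ gives equal norms.

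For (S2)$\Rightarrow$(S1), I would use (S3). Let $\theta:=(\iota_{T^*})_\#^{-1}$, and consider $\Gamma_{V_\#}\circ\theta:T\to T^{\rd\rd}$. This is a completely positive complete isometry into a dual operator system. Write $W$ for the predual of $T^{\rd\rd}$. Then $(\Gamma_{V_\#}\circ\theta)^*|_W:W\to T^*$ has dual map $\rho:T^{**}\to T^{\rd\rd}$, which is weak$^*$-continuous, completely contractive and satisfies $\rho\circ\kappa_T=\Gamma_{V_\#}\circ\theta$. By the displayed identity, $\rho\circ\overline{\iota_{T^*}^*}$ is the identity on $\iota_{(T^\rd)^*}(\kappa_{(T^\rd)_*}(V_*))$. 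This set is weak$^*$-dense in $T^{\rd\rd}$, because $\kappa$ has weak$^*$-dense range and $\iota_{(T^\rd)^*}$ is a weak$^*$-homeomorphism by Lemma \ref{lem:equiv-tight}(a). Both maps are weak$^*$-continuous, so $\rho=\overline{\iota_{T^*}^*}^{-1}$. A complete contraction with completely contractive inverse is a complete isometry, which gives (S1).

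\textbf{Part (a).} Let $T:=V_\#$. By Lemma \ref{lem:predual1}(c), $T$ has a generating cone and $\iota_{V_*}:V_*\to T$ is a complete order isomorphism. By Lemma \ref{lem:predual1}(b), $\Phi:=\overline{\iota_{V_*}^*}:T^\rd\to V$ is a weak$^*$-continuous complete order monomorphism in $\Morc_w$. It is in fact bijective, since $\iota_{V_*}^*$ and $\iota_{(T^\rd)^*}$ are bijections by Lemma \ref{lem:JN2}(d). Therefore $\Phi_*:V_*\to(T^\rd)_*$ is a complete order isomorphism.

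I will show (S3) by building a completely positive complete contraction $T\to(T^\rd)_\#$ inverse to $(\iota_{T^*})_\#$. Consider $\iota_{(T^\rd)_*}\circ\Phi_*:V_*\to(T^\rd)_\#$, which is completely positive. The main obstacle is showing it is completely contractive for the norm that $V_*$ carries inside $V_\#$. The norm in $(T^\rd)_\#$ of $\Phi_*^{(n)}(\omega)$ is the supremum of $\|g^{(n)}(\Phi_*^{(n)}\omega)\|$ over $g\in B^+_{\BM_m(T^\rd)}=\Morc((T^\rd)_*,\BM_m)$. Each such term equals $\|(\Phi^{(m)}g)^{(n)}(\omega)\|$ with $\Phi^{(m)}g\in B^+_{\BM_m(V)}=\Morc(V_*,\BM_m)$, because $\Phi$ is a completely positive complete contraction. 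So the supremum is at most $\|j_{V_*}^{(n)}(\omega)\|$, which is the norm in $V_\#$.

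Lemma \ref{lem:JN2}(a) then gives $\sigma\in\Morc(V_\#,(T^\rd)_\#)$ with $\sigma\circ\iota_{V_*}=\iota_{(T^\rd)_*}\circ\Phi_*$. It remains to check that $(\iota_{T^*})_\#\circ\sigma$ is the identity on the dense set $\iota_{V_*}(V_*)$. This amounts to $(\iota_{T^*})_*\circ\Phi_*=\iota_{V_*}$, which is the predual of diagram \eqref{eqt:iota-bar-iota}. Since $(\iota_{T^*})_\#$ is bijective by Lemma \ref{lem:equiv-tight}(b), $\sigma$ is its completely contractive inverse. Thus $(\iota_{T^*})_\#$ is completely isometric, and $T$ is tight by part (b).
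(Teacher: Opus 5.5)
Your proof is correct, but it runs in the opposite direction from the paper's.

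\textbf{The paper's route.} The paper proves (a) first. It verifies by a diagram chase that $\overline{\iota_{(V_\#)^*}^*}\circ(\overline{\iota_{V_*}^*})^\rd\circ\Gamma_{V_\#}=\kappa_{V_\#}$; this step uses the dual functor applied to the morphism $\overline{\iota_{V_*}^*}$. It then uses Goldstine density, Banach--Alaoglu (Lemma \ref{lem:unit-ball-weak-st-dense}(a),(c)) and injectivity to show that $\overline{\iota_{(V_\#)^*}^*}$ is completely isometric. With (a) in hand, the hard implication (S3)$\Rightarrow$(S1) is one line: (S3) says $T\cong(T^\rd)_\#$, and $T^\rd$ has a generating cone.

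\textbf{Your route.} You use essentially the same bidual-extension idea, but for (S2)$\Rightarrow$(S1) instead of for (a).
\begin{itemize}
\item The paper extracts weak$^*$ cluster points of $\Lambda(y_i)$.
\item You take the normal extension $\rho$ of $\Lambda=\Gamma_{(T^\rd)_\#}\circ(\iota_{T^*})_\#^{-1}$.
\end{itemize}
You then check that $\rho\circ\overline{\iota_{T^*}^*}$ is the identity on a weak$^*$-dense set. This is cleaner, and it does not even need injectivity of $\overline{\iota_{T^*}^*}$.

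You then get (a) by an elementary norm comparison. The matrix norms of $X_\os$ are suprema over $\Morc(X,\BM_m)$, and $\overline{\iota_{V_*}^*}$ maps $B^+_{\BM_m((V_\#)^\rd)}$ into $B^+_{\BM_m(V)}$. So the universal property gives a completely contractive inverse of $(\iota_{T^*})_\#$ for $T=V_\#$.

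\textbf{What each buys.} Your version avoids the functor $(\cdot)^\rd$ on morphisms and the four-step diagram chase. It also exhibits (S3) for $V_\#$ directly, namely $V_\#\cong((V_\#)^\rd)_\#$, for a transparent reason: only positive unit balls enter the norm of $V_\#$. The paper's version keeps (a) logically independent of (b).

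\textbf{One small slip.} In (a), bijectivity of $\Phi$ comes from $\iota_{T^*}=\iota_{(V_\#)^*}$, not $\iota_{(T^\rd)^*}$, being bijective by Lemma \ref{lem:JN2}(d). Bijectivity of $\iota_{V_*}^*$ comes from Lemma \ref{lem:predual1}(c). In any case your argument only uses weak$^*$-continuity and complete positivity of $\Phi$ there, together with bijectivity of $(\iota_{T^*})_\#$ from Lemma \ref{lem:equiv-tight}(b).
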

	\begin{proof}
		(a) %Consider the maps 	$\iota_{V_*}\in \Morc(V_*, V_\#)$ and $\iota_{(V_\#)^*}\in \Morc((V_\#)^*, (V_\#)^\rd)$.
		First of all, one has 
		\begin{align*}
			\overline{\iota_{(V_\#)^*}^*} \circ (\overline{\iota_{V_*}^*})^\rd\circ \Gamma_{V_\#}\circ \iota_{V_*} 
			& =\ \overline{\iota_{(V_\#)^*}^*} \circ (\overline{\iota_{V_*}^*})^\rd\circ \iota_{V^*}\circ \kappa_{V_*}\\
			& =\ \overline{\iota_{(V_\#)^*}^*} \circ \iota_{((V_\#)^\rd)^*}\circ (\overline{\iota_{V_*}^*})^*\circ \kappa_{V_*}\\
			& =\ \iota_{(V_\#)^*}^*\circ (\overline{\iota_{V_*}^*})^*\circ \kappa_{V_*}\\
			&  =\  (\overline{\iota_{V_*}^*}\circ \iota_{(V_\#)^*})^* \circ \kappa_{V_*}\\
			&  =\  \iota_{V_*}^{**}\circ \kappa_{V_*}
			\ = \ \kappa_{V_\#}\circ \iota_{V_*};
		\end{align*}
		here, the first equality follows from \eqref{eqt:iota-ext-Gamma-circ-iota}, the second one follows from \eqref{eqt:def-psi-d} (for $\varphi = \overline{\iota_{V_*}^*}$), the third one from \eqref{eqt:iota-T*-*} (for $T= V_\#$), the fifth one from \eqref{eqt:iota-bar-iota}, and the last one from \eqref{eqt:kappa} (for $\phi = \iota_{V_*}$).
		As both $\overline{\iota_{(V_\#)^*}^*} \circ (\overline{\iota_{V_*}^*})^\rd\circ \Gamma_{V_\#}$ and $\kappa_{V_\#}$ are bounded, and $\iota_{V_*}(V_*)$ is norm-dense in $V_\#$, we see that the following diagram commutes:
		\begin{equation*}
			\xymatrix{
				& V_\# \ar[d]_{\Gamma_{V_\#}} \ar[r]^{\kappa_{V_\#}} & (V_\#)^{**}\\
				& V^\rd \ar[r]^{\big(\overline{\iota_{V_*}^*}\big)^\rd} & (V_\#)^{\rd\rd} \ar[u]_{\overline{\iota_{(V_\#)^*}^*}}
			}
		\end{equation*}

		In order to show that $V_\#$ is tight, i.e., the map $\Theta:=\overline{\iota_{(V_\#)^*}^*}$ is completely isometric, we set  $\Lambda:=\big(\overline{\iota_{V_*}^*}\big)^\rd\circ \Gamma_{V_\#}$  and $S:= V_\#$. 
		The diagram above means that 
		\begin{equation}\label{eqt:tight}
			\Theta \circ \Lambda = \kappa_S.
		\end{equation}
		
		By Lemmas \ref{lem:predual1}(c) and \ref{lem:equiv-tight}(a) (for $T=V_\#$), one knows that the complete contraction $\Theta$ is a weak$^*$-continuous complete order isomorphism. 
		Consider $n \in \BN$. 
		Let $x\in \BM_n(S^{\rd\rd})$ with $\big\|\Theta^{(n)}(x)\big\| \leq 1$. 
		We obtain from Lemma \ref{lem:unit-ball-weak-st-dense}(a) a net $\{y_i\}_{i\in \KI}$ in $B_{\BM_n(S)}$ such that 
		\begin{quotation}
			$\{\kappa_{S}^{(n)}(y_i)\}_{i\in \KI}$ will  $\sigma(\BM_n(S^{**}), \BM_n(S^*))$-converge to $\Theta^{(n)}(x)\in B_{\BM_n(S^{**})}$.
		\end{quotation}
		Since $\Lambda^{(n)}$ is contractive, Lemma \ref{lem:unit-ball-weak-st-dense}(c) produces a subnet of $\big\{\Lambda^{(n)}(y_i)\big\}_{i\in \KI}$ that  $\sigma\big(\BM_n(S^{\rd\rd}), \BM_n((S^{\rd\rd})_*)\big)$-converges to some element $z\in B_{\BM_n(S^{\rd\rd})}$. 
		The weak$^*$-continuity of $\Theta$ and \eqref{eqt:tight} ensure that $\Theta^{(n)}(z)  = \Theta^{(n)}(x)$. 
		As $\Theta$ is injective, we know that $x = z\in B_{\BM_n(S^{\rd\rd})}$. This implies that  $\Theta^{(n)}$ is isometric. 
		
		\smnoind
		(b) $(S1)\Rightarrow (S2)$. 
		By \eqref{eqt:kappa} (for $\phi=(\iota_{T^*})_*$), the following diagram
		\begin{equation}\label{eqt:kappa-iota}
			\xymatrix{
				& (T^\rd)_* \ar[d]_{\kappa_{(T^\rd)_*}} \ar[r]^{(\iota_{T^*})_*} & T \ar[d]^{\kappa_T}\\
				& (T^{\rd})^* \ar[r]^{\iota_{T^*}^*} & T^{**} 
			}
		\end{equation}
		commutes.
Moreover, it follows from \eqref{eqt:iota-ext-Gamma-circ-iota} (for $V= T^\rd$) that 
\begin{equation}\label{eqt:Gamma-iota}
	\xymatrix{
		& (T^\rd)_* \ar[d]_{\kappa_{(T^\rd)_*}} \ar[r]^{\iota_{(T^\rd)_*}} & (T^\rd)_\# \ar[d]^{\Gamma_{(T^\rd)_\#}}\\
		& (T^\rd)^* \ar[r]^{\iota_{(T^\rd)^*}} & T^{\rd\rd}
	}
\end{equation}
is a commuting diagram. 
By parts (a) and (b) of Lemma \ref{lem:equiv-tight}, we know that both $\iota_{(T^\rd)_*}$ and $\iota_{(T^\rd)^*}$ are bijective. 
Thus, \eqref{eqt:Gamma-iota} becomes 
\begin{equation*}
	\xymatrix{
		& (T^\rd)_\# \ar[d]_{\Gamma_{(T^\rd)_\#}} \ar[r]^{\iota_{(T^\rd)_*}^{-1}} & (T^\rd)_* \ar[d]^{\kappa_{(T^\rd)_*}} \\
		& T^{\rd\rd} \ar[r]^{\iota_{(T^\rd)^*}^{-1}} & (T^\rd)^*
	}
\end{equation*}
and this, together with \eqref{eqt:kappa-iota}, produces 
\begin{align*}
\kappa_T\circ (\iota_{T^*})_* \circ \iota_{(T^\rd)_*}^{-1} 
& =  \iota_{T^*}^*\circ \iota_{(T^\rd)^*}^{-1}\circ \Gamma_{(T^\rd)_\#}. 
\end{align*}
Therefore, \eqref{eqt:iota-T*-*} and \eqref{eqt:iota-T^d_*} imply that 
$$\kappa_T\circ (\iota_{T^*})_\# =  \overline{\iota_{T^*}^*}\circ \Gamma_{(T^\rd)_\#}.$$ 
Now, Lemma \ref{lem:predual-dual-os} and the assumption of $\overline{\iota_{T^*}^*}$ being a complete isometry ensure that $(\iota_{T^*})_\#$ is completely isometric.

		\smnoind
		$(S2)\Rightarrow (S3)$. 
		This follows from Lemma \ref{lem:equiv-tight}(b). 
		
		\smnoind
		$(S3)\Rightarrow (S1)$. 
		This follows from part (a) above and the fact that the dual operator system $T^\rd$ has a generating cone (because of Lemma \ref{lem:JN2}(g)). 
	\end{proof}

	%\medskip
	
	In symmetry, we have the following result for dual operator systems.

	%\medskip
	
	\begin{thm}\label{thm:predual-dual}
		Suppose that $V$ is a dual operator system with a generating cone. 
		
		\smnoind
		(a) $(V_\#)^\rd$ is tight. 
		
		\smnoind
		(b) The following statements are equivalent. 
		\begin{enumerate}[(D1)]
			\item $V\cong T^\rd$ as dual operator systems (completely isometrically), for a complete operator system $T$. 
			
			\item $\overline{\iota_{V_*}^*}: (V_\#)^\rd \to V$ is completely isometric.
			
			\item $\overline{\iota_{V_*}^*}: (V_\#)^\rd \to V$ is a weak$^*$-homeomorphic completely isometric complete order isomorphism. 
			
			\item $V$ is tight. 
		\end{enumerate}
	\end{thm}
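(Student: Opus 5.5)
For part (a), I will apply Theorem \ref{thm:os-predual-tight} to $T:=V_\#$. By Lemma \ref{lem:predual1}(c), $V_\#$ is a complete operator system with a generating cone, and Theorem \ref{thm:os-predual-tight}(a) says it is tight. So (S3) holds for $T=V_\#$: $(\iota_{(V_\#)^*})_\#:((V_\#)^\rd)_\#\to V_\#$ is a completely isometric complete order isomorphism. It remains to show that the dual operator system $W:=(V_\#)^\rd$ (which has a generating cone by Lemma \ref{lem:JN2}(g)) is tight. Tightness is defined only for complete operator systems, so I read ``$W$ is tight'' as: $\overline{\iota_{W^*}^*}:W^{\rd\rd}\to W^{**}$ is completely isometric, where $W$ is regarded as a complete operator system. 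I would establish a general transfer principle: if $T$ is tight, then $T^\rd$ is tight. The candidate argument applies Theorem \ref{thm:os-predual-tight}(a) to the dual operator system $V'=T^\rd$, giving that $(T^\rd)_\#$ is tight. Then (S3) identifies $(T^\rd)_\#\cong T$ completely isometrically and order isomorphically. Applying the dual functor twice (Lemma \ref{lem:JN2}(c)) transports these identifications, so $T^{\rd\rd\rd}\to T^{\rd*}$ becomes completely isometric. I expect the bookkeeping of which canonical maps correspond under these isomorphisms to be the main technical burden.

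For part (b), I will prove (D4)$\Rightarrow$(D1)$\Rightarrow$(D2)$\Rightarrow$(D3)$\Rightarrow$(D4).

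\textbf{(D2)$\Rightarrow$(D3).} By Lemma \ref{lem:predual1}(b),(c), $\overline{\iota_{V_*}^*}$ is a complete order monomorphism. Lemma \ref{lem:predual1}(c) shows $\iota_{V_*}$ is bijective, so $\iota_{V_*}^*$ is a bijection $(V_\#)^*\to V$. Lemma \ref{lem:JN2}(d) shows $\iota_{(V_\#)^*}$ is bijective, and \eqref{eqt:iota-bar-iota} then gives surjectivity of $\overline{\iota_{V_*}^*}$. Weak$^*$-homeomorphy follows from Lemma \ref{lem:image-weak-st-cont-bdd-below}.

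\textbf{(D3)$\Rightarrow$(D4).} $V\cong (V_\#)^\rd$, which is tight by part (a).

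\textbf{(D4)$\Rightarrow$(D1).} Take $T=V_\#$ and use (D3). This means (D4)$\Rightarrow$(D2) must be proved directly, and here I expect the main obstacle. I would mimic the density argument in the proof of Theorem \ref{thm:os-predual-tight}(a). Set $S:=V_\#$. Using tightness of $V$ as a complete operator system, I would identify $V^{\rd\rd}$ with $V^{**}$. Then compose $(\overline{\iota_{V_*}^*})^\rd$ with maps from Lemma \ref{lem:predual-dual-os} to produce a completely contractive $\Lambda:V\to (V_\#)^\rd$ satisfying $\overline{\iota_{V_*}^*}\circ\Lambda=\mathrm{id}_V$. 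This would give complete isometry together with injectivity.

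\textbf{(D1)$\Rightarrow$(D2).} If $V\cong T^\rd$, I may assume $V=T^\rd$ with $T$ having a generating cone; replacing $T$ by $(T^\rd)_\#$ if needed, I would arrange via (a) and Theorem \ref{thm:os-predual-tight} that $T$ is tight. Then $(T^\rd)_\#\cong T$ by (S3). The map $\overline{\iota_{V_*}^*}$ becomes the inverse of $((\iota_{T^*})_\#)^\rd$ composed with canonical isomorphisms, and the dual functor preserves completely isometric order isomorphisms, giving (D2).

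I expect the reduction to a tight $T$ in (D1)$\Rightarrow$(D2) to be delicate. If it fails, I would instead argue directly from the universal property in Lemma \ref{lem:JN2}(b) that $\overline{\iota_{V_*}^*}$ has a completely contractive inverse induced from $T\to V_\#$ via Lemma \ref{lem:JN2}(a).
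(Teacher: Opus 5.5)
Your proposal has genuine gaps in part (a) and in (D1)$\Rightarrow$(D2).

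\textbf{Part (a).} Applying Theorem \ref{thm:os-predual-tight}(a) to $T^\rd$ and then (S3) only returns the tightness of $T\cong (T^\rd)_\#$, which you already assumed. Dualizing $(T^\rd)_\#\cong T$ twice only relates $((T^\rd)_\#)^{\rd\rd}$ to $T^{\rd\rd}$. Nothing here touches the map $\overline{\iota_{(T^\rd)^*}^*}:T^{\rd\rd\rd}\to (T^\rd)^{**}$ that tightness of $T^\rd$ is about, so the ``transfer principle'' is not established.

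The missing idea is to realize $(V_\#)^\rd$ itself as an operator system predual:
\begin{enumerate}
\item $W:=(V_\#)^{**}$ is a dual operator system with a generating cone, by Lemma \ref{lem:JN2}(f) and Lemma \ref{lem:equiv-tight}(c); this uses only that $V_\#$ has a generating cone.
\item $W_*=(V_\#)^*$, so $W_\#=((V_\#)^*)_\os$.
\item Corollary \ref{cor:Y*-os} with $Y=V_\#$ shows that $Q_{(V_\#)^*}$ restricts to a completely isometric complete order isomorphism of $((V_\#)^*)_\os$ onto $(V_\#)^\rd$.
\item Theorem \ref{thm:os-predual-tight}(a) applied to $W$ then gives (a).
\end{enumerate}
Tightness of $V_\#$ is never needed. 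Since your (D3)$\Rightarrow$(D4) relies on (a), this gap propagates into (b).

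\textbf{(D1)$\Rightarrow$(D2).} As stated, the reduction is circular. (D1) gives no generating cone on $T$. Replacing $T$ by $(T^\rd)_\#$ presupposes $((T^\rd)_\#)^\rd\cong T^\rd$, which is (D3) for the dual operator system $T^\rd$, i.e.\ the statement being proved.

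In your fallback the arrow is reversed. Since the dual functor is contravariant, a completely contractive right inverse $V\to(V_\#)^\rd$ must come from a map $V_\#\to T$, not $T\to V_\#$. Concretely, let $\Lambda:T^\rd\to V$ be the given isomorphism.
\begin{enumerate}
\item Extend the preadjoint $\varphi:=(\Lambda\circ\iota_{T^*})_*\in\Morc(V_*,T)$ to $\varphi_\os\in\Morc(V_\#,T)$ by Lemma \ref{lem:JN2}(a).
\item Set $\theta:=(\varphi_\os)^\rd$ and check on the weak$^*$-dense set $\iota_{T^*}(T^*)$ that $\overline{\iota_{V_*}^*}\circ\theta=\Lambda$.
\item Then $\theta\circ\Lambda^{-1}$ is a completely contractive right inverse of the injective map $\overline{\iota_{V_*}^*}$, which is therefore completely isometric.
\end{enumerate}
This needs no hypothesis on $T$.

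\textbf{(D4)$\Rightarrow$(D2).} Your right-inverse idea can be made to work, but not via $(\overline{\iota_{V_*}^*})^\rd$, which maps $V^\rd$ to $(V_\#)^{\rd\rd}$. The right map is $\Gamma_{V_\#}^\rd\circ(\overline{\iota_{V^*}^*})^{-1}\circ\kappa_V$, which is completely contractive by tightness. It is a right inverse of $\overline{\iota_{V_*}^*}$ because of two facts:
\begin{itemize}
\item the commuting square $\kappa_{V_*}^*\circ\overline{\iota_{V^*}^*}=\overline{\iota_{V_*}^*}\circ\Gamma_{V_\#}^\rd$, verified on $\iota_{(V^\rd)^*}((V^\rd)^*)=V^{\rd\rd}$;
\item the identity $\kappa_{V_*}^*\circ\kappa_V=\mathrm{id}_V$.
\end{itemize}
That square is the real content of the step. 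The paper proves it and then lifts norm-preservingly through the complete quotient map $\kappa_{V_*}^*$, rather than using the section $\kappa_V$.
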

	\begin{proof}
	Note that the operator system $V_\#$ has a generating cone, because of Lemma \ref{lem:predual1}(c).
Hence, Lemma \ref{lem:predual1}(b) tell us  that  
$$\overline{\iota_{V_*}^*}:(V_\#)^\rd \to V$$ is a complete order monomorphism

\smnoind
(a)	It follows from Lemma \ref{lem:JN2}(f) and Lemma \ref{lem:equiv-tight}(c) that $(V_\#)^{**}$ is a dual operator system having a generating cone.
Moreover, since $((V_{\#})^{**})_{*} = (V_{\#})^{*}$, we have 
$$((V_{\#})^{**})_\# = ((V_{\#})^{*})_\os.$$
On the other hand, Corollary \ref{cor:Y*-os} tells us that $Q_{(V_{\#})^*}|_{((V_{\#})^*)_\os}: ((V_{\#})^{*})_\os \to (V_\#)^\rd$ is a completely isometric complete order isomorphism. 
Hence, $(V_\#)^\rd\cong ((V_{\#})^{**})_\#$ as operator systems, and Theorem \ref{thm:os-predual-tight}(a) imply that it is tight.  

		\smnoind
		(b) $(D1)\Rightarrow (D2)$. 
		Statement (D1) produces a weak$^*$-homeomorphic completely isometric complete order isomorphism $\Lambda:T^\rd \to V$. 
		Let us set 
		\begin{equation}\label{eqt:def-varphi}
		\varphi:= (\Lambda\circ \iota_{T^*})_*\in \Morc(V_*,T).
		\end{equation}
		Lemma \ref{lem:JN2}(a) produces $\varphi_\os\in \Morc\big(V_\#,T\big)$ satisfying 
		\begin{equation}\label{eqt:varphi-check}
			\varphi_\os \circ \iota_{V_*} = \varphi.
		\end{equation}
		Moreover, Lemma \ref{lem:JN2}(c) produces 
		$\theta:= (\varphi_\os)^\rd \in \Morc_w\big(T^\rd, (V_\#)^\rd \big)$ 
		such that 
		\begin{equation}\label{eqt:thetat-circ}
		\theta\circ \iota_{T^*} = \iota_{(V_\#)^*}\circ (\varphi_\os)^*.
		\end{equation}
		
		We first claim that  
		\begin{equation}\label{eqt:comp=id}
			\overline{\iota_{V_*}^*}\circ \theta = \Lambda. 
		\end{equation}
		In fact, for $f\in T^*$ and $\omega\in V_*$, one has 
		\begin{align*}
			(\overline{\iota_{V_*}^*}\circ \theta)\big(\iota_{T^*}(f)\big)(\omega)
			& =\  \big(\overline{\iota_{V_*}^*}\big(\iota_{(V_\#)^*}\big((\varphi_\os)^*(f)\big)\big)\big)(\omega)
			\ = \ \big(\iota_{V_*}^*\big(f\circ \varphi_\os\big)\big)(\omega)\\
			& =\ f(\varphi_\os(\iota_{V_*}(\omega)))
			\ =\ f(\varphi(\omega))\\
			& =\ f\big((\Lambda\circ \iota_{T^*})_*(\omega)\big) 
			\ = \ \Lambda\big(\iota_{T^*}(f)\big)(\omega);
		\end{align*}
here, the first equality follows from \eqref{eqt:thetat-circ}, the second one from \eqref{eqt:iota-bar-iota}, the fourth one from \eqref{eqt:varphi-check} and the fifth one from \eqref{eqt:def-varphi}. 
	This means that 
	$$\overline{\iota_{V_*}^*}\circ \theta\circ \iota_{T^*} = \Lambda\circ \iota_{T^*}.$$ 
		Since $\iota_{T^*}(T^*)$ is weak$^*$-dense in $T^\rd$ (see Notation \ref{ntn:predual-map}(c)) and both $\overline{\iota_{V_*}^*}\circ \theta$ as well as $\Lambda$ are weak$^*$-continuous, we obtain \eqref{eqt:comp=id}.  
		
		Now, by the injectivity of $\overline{\iota_{V_*}^*}$ (see the beginning of this proof), the bijectivity of $\Lambda$ as well as \eqref{eqt:comp=id}, one sees that $\theta$ is surjective. 
		Finally, as $\theta$ is a surjective complete contraction and $\Lambda$ is a complete isometry, we know from \eqref{eqt:comp=id} that $\overline{\iota_{V_*}^*}$ is completely isometric. 
		
		\smnoind
		$(D2)\Rightarrow (D3)$. 
		As in the beginning of this proof, $\overline{\iota_{V_*}^*}:(V_\#)^\rd \to V$ is a complete order monomorphism. 
		Moreover, Lemma \ref{lem:predual1}(c) tells us  that  $\iota_{V_*}: V_* \to V_\#$ is a Banach space isomorphism, and so is $\iota_{V_*}^*: (V_\#)^*\to V$. 
		The surjectivity of  $\iota_{V_*}^*$ and  \eqref{eqt:iota-bar-iota} then ensure that $\overline{\iota_{V_*}^*}$ is also surjective, and hence it is a complete order isomorphism. 
		Furthermore, it follows from Lemma \ref{lem:image-weak-st-cont-bdd-below} and the surjectivity of $\overline{\iota_{V_*}^*}$ that it is a weak$^*$-homeomorphism. 
		
		\smnoind
		$(D3)\Rightarrow (D1)$. 
		This is plain. 
		
		\smnoind
		$(D3)\Rightarrow (D4)$. 
This follows from part (a) above.  
		
		\smnoind
		$(D4)\Rightarrow (D2)$. 
		Let us first verify that the following diagram 
		\begin{equation}\label{shorteq}
			\xymatrix{
				& V^{\rd\rd} \ar[r]^{\overline{\iota^{*}_{V^{*}}}} \ar[d]_{\Gamma_{V_{\#}}^\rd} & V^{**} \ar[d]^{\kappa^{*}_{V_{*}}} \\
				& (V_\#)^\rd \ar[r]^{\overline{\iota^{*}_{V_{*}}}} & V
			}
		\end{equation}
		commutes. In fact, let $g\in (V^{\rd})^{*}$.
		We learn from \eqref{eqt:iota-T*-*} (for $T =V$) that 
		\begin{equation*}
			\kappa^{*}_{V_{*}}\circ \overline{\iota^{*}_{V^{*}}}\big(\iota_{(V^{\rd})^{*}}(g)\big) 
			= \kappa^{*}_{V_{*}}\circ \iota^*_{V^{*}}(g)
			 = g\circ \iota_{V^{*}}\circ \kappa_{V_{*}}.
		\end{equation*}
		On the other hand, it follows from \eqref{eqt:def-psi-d} (for $\varphi = \Gamma_{V_{\#}}$) as well as  \eqref{eqt:iota-bar-iota} that 
		\begin{align*}
			\overline{\iota^{*}_{V_{*}}}\circ \Gamma_{V_{\#}}^{\rd}\big(\iota_{(V^{\rd})^{*}}(g)\big)
			& =  \overline{\iota^{*}_{V_{*}}}\big(\iota_{(V_\#)^{*}}\big(\Gamma_{V_{\#}}^*(g)\big)\big)
			 = \iota_{V_{*}}^*\circ \Gamma_{V_{\#}}^*(g) 
			 = g\circ \Gamma_{V_{\#}}\circ \iota_{V_{*}}.
		\end{align*}
As $\iota_{V^{*}}\circ \kappa_{V_{*}} = \Gamma_{V_{\#}}\circ \iota_{V_{*}}$ 
(see \eqref{eqt:iota-ext-Gamma-circ-iota}), the two displayed relations above give 
\begin{equation}\label{eqt:D4}
\kappa^{*}_{V_{*}}\circ \overline{\iota^{*}_{V^{*}}}\circ \iota_{(V^{\rd})^{*}} = \overline{\iota^{*}_{V_{*}}}\circ \Gamma_{V_{\#}}^{\rd}\circ \iota_{(V^{\rd})^{*}}.
\end{equation}
Furthermore, we know from Lemma \ref{lem:JN2}(g) that $V^\rd$ has a generating cone, which ensures  $\iota_{(V^{\rd})^{*}}$ to be surjective (because of Lemma \ref{lem:JN2}(d)). 
Now, \eqref{eqt:D4} tells us that the diagram \eqref{shorteq} commutes.
		
		Let $n\in \BN$ and $[y_{ij}] \in \BM_n(V)$. 
		Since $\kappa_{V_*}^*:V^{**}\to (V_*)^* = V$ is a complete quotient map (see \cite[Corollary 4.1.9]{OPS}), we know that $(\kappa_{V_*}^*)^{(n)}:\BM_n(V^{**})\to \BM_n(V)$ is an exact quotient map (because both $\BM_n(V^{**})$ and $\BM_n(V)$ are dual Banach spaces). 
		Hence, 
		there exists $[v_{ij}]\in \BM_{n}(V^{**})$ such that
		\begin{equation*}
			\big\|[v_{ij}]\big\| = \big\|[y_{ij}]\big\| \quad \text{and} \quad (\kappa_{V_*}^*)^{(n)}\big([v_{ij}]\big)= [y_{ij}].
		\end{equation*}
		On the other hand, as $V$ is tight, there exists $[u_{ij}]\in \BM_{n}(V^{\rd\rd})$ satisfying 
		\begin{equation*}
			\big\|[u_{ij}]\big\| = \big\|[v_{ij}]\big\| \quad \text{and} \quad (\overline{\iota^{*}_{V^{*}}})^{(n)}\big([u_{ij}]\big)= [v_{ij}].
		\end{equation*}
		Let us set $[x_{ij}] := \big(\Gamma_{V_{\#}}^\rd\big)^{(n)}\big([u_{ij}]\big)$. 
		By \eqref{shorteq} and the above, we have 
		\begin{equation*}
			\big\|[x_{ij}]\big\| \leq \big\|[u_{ij}]\big\| = \big\|[y_{ij}]\big\| \quad \text{and} \quad \big(\overline{\iota^{*}_{V_{*}}}\big)^{(n)}\big([x_{ij}]\big)= [y_{ij}].
		\end{equation*}
		However, as $\overline{\iota^{*}_{V_{*}}}$ is a complete contraction, we conclude that 
		$$\big\|[x_{ij}]\big\| = \big\|[y_{ij}]\big\|.$$ 
				This means that $\big(\overline{\iota^{*}_{V_{*}}}\big)^{(n)}$ is an exact quotient map. 
		As $\overline{\iota^{*}_{V_{*}}}$ is also injective (see the beginning of this proof), we know that $\big(\overline{\iota^{*}_{V_{*}}}\big)^{(n)}$ is isometric, for arbitrary $n\in \BN$. 
	\end{proof}
	
	%\medskip
	
	It follows from \cite[Theorem 3.17(b)]{Ng1} that unital complete operator systems are tight (see also \cite[Propostion 6.8(c)]{JN2}).
	This, together with Theorem \ref{thm:os-predual-tight}(b) and Theorem \ref{thm:predual-dual}(b), gives the following corollary.

	%\medskip
	
	\begin{cor}\label{cor:unital}
		(a) If $T$ is a unital complete operator system, then $T\cong (T^{\rd})_\#$ as operator systems (completely isometrically).
		
		\smnoind
		(b) If $V$ is a unital dual operator system, then $V \cong (V_\#)^\rd$ as dual operator systems (completely isometrically). 
	\end{cor}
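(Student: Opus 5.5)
The plan is to check the hypotheses of Theorem \ref{thm:os-predual-tight}(b) and Theorem \ref{thm:predual-dual}(b), namely a generating cone and tightness, and then read off the conclusions. Tightness is supplied by \cite[Theorem 3.17(b)]{Ng1}, which says that every unital complete operator system is tight.

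\textbf{Generating cones.} First I will note that a unital operator system automatically has a generating cone. Take a completely isometric complete order monomorphism $\Lambda$ into $\CL(\KH)$ with $1=\Lambda(e)$ for some $e$ in the system. For a self-adjoint element $x$, the identity
$$x=\tfrac12\big(\|x\|e+x\big)-\tfrac12\big(\|x\|e-x\big)$$
writes $x$ as a difference of two elements whose images under $\Lambda$ are positive operators. Since $\Lambda$ is a complete order monomorphism, both summands are positive, so the cone is generating.

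\textbf{Part (a).} Let $T$ be a unital complete operator system. By the previous step it has a generating cone, so Definition \ref{defn:d-dual} applies, and by \cite[Theorem 3.17(b)]{Ng1} it is tight. The implication (S1)$\Rightarrow$(S3) of Theorem \ref{thm:os-predual-tight}(b) then says that $(\iota_{T^*})_\#:(T^\rd)_\#\to T$ is a completely isometric complete order isomorphism. This is exactly statement (a).

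\textbf{Part (b).} Let $V$ be a unital dual operator system. The weak$^*$-homeomorphic embedding $\Lambda$ of $V$ onto a weak$^*$-closed subspace of $\CL(\KH)$ containing $1$ shows that $V$ is a unital operator system. It is also complete, since it is a dual Banach space, or equivalently since its image is norm closed. By the first step $V$ has a generating cone. Regarded as a complete operator system, $V$ is therefore tight by \cite[Theorem 3.17(b)]{Ng1}. The implication (D4)$\Rightarrow$(D3) of Theorem \ref{thm:predual-dual}(b) then gives that $\overline{\iota_{V_*}^*}:(V_\#)^\rd\to V$ is a weak$^*$-homeomorphic completely isometric complete order isomorphism, which is statement (b).

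The only delicate point is in part (b). Tightness in Definition \ref{defn:d-dual} concerns $V$ purely as a complete operator system, with $V^{\rd\rd}$ and $V^{**}$ formed without reference to the weak$^*$-topology of $V$. This is precisely the notion used in (D4), so \cite{Ng1} applies directly and nothing further is needed.
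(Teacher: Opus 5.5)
Your proof is correct and follows the paper's argument. The paper's proof also takes tightness of unital complete operator systems from \cite[Theorem 3.17(b)]{Ng1}, then applies Theorem \ref{thm:os-predual-tight}(b) to get (a) and Theorem \ref{thm:predual-dual}(b) to get (b). Your explicit checks fill in hypotheses the paper leaves implicit: that unital systems have a generating cone, and that a unital dual operator system is a unital complete operator system to which tightness in the sense of (D4) applies.
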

	
%\medskip

Finally, let $\CE$ be the non-complete operator system  associated with a tolerance relation (as defined in \cite{CvD,CvD2}). 
By Corollary 6.12(a) and Proposition 6.8(c) of \cite{JN2}, 
$$\overline{\iota_{\tilde \CE^*}^*}: \tilde \CE^{\rd\rd} \to \tilde \CE^{**}$$ 
is completely isometric, where $\tilde \CE$ is the complete operator system as in Corollary \ref{cor:compl-op-sys}. 
This means that the complete operator system $\tilde \CE$ is tight, and Theorem \ref{thm:os-predual-tight}(b) gives the following. 

%\medskip 

\begin{cor}\label{cor:toler}
Let $\CE$ be the operator system associated with a tolerance relation. 
Then $\tilde \CE = (\tilde \CE^\rd)_\#$. 
\end{cor}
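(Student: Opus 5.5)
The plan is to apply Theorem \ref{thm:os-predual-tight}(b) to $T:=\tilde\CE$. That theorem needs $\tilde\CE$ to be a complete operator system with a generating cone that is tight. By Corollary \ref{cor:compl-op-sys}, $\tilde\CE$ (the operator space completion of $\CE$, with matrix cone the norm-closure of $\BM_\infty(\CE)_+$) is a complete operator system. The discussion just before the statement, using Corollary 6.12(a) and Proposition 6.8(c) of \cite{JN2}, says that $\overline{\iota_{\tilde\CE^*}^*}:\tilde\CE^{\rd\rd}\to\tilde\CE^{**}$ is completely isometric. By Definition \ref{defn:d-dual}, that is exactly tightness, provided $\tilde\CE$ has a generating cone.

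So the one point to verify is the generating cone. I would take this from \cite{JN2}, where the tolerance-relation systems are shown to fit the dual-functor framework, which presupposes generating cones. Concretely, $\CE$ is spanned by matrix units $e_{x,y}$ with $(x,y)$ in the tolerance relation, and these are self-adjoint combinations of positive elements. For instance, $e_{x,x}+e_{y,y}\pm(e_{x,y}+e_{y,x})$ is positive whenever $x\sim y$, and similarly with the imaginary parts. Passing to $\tilde\CE$ takes some care, because a dense generating subspace need not give a generating completion. If the direct argument is delicate, I would instead note that \cite[Proposition 6.8(c)]{JN2} is formulated for complete operator systems with generating cones. Hence the conclusion quoted from \cite{JN2} already includes this property.

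Once tightness is established, Theorem \ref{thm:os-predual-tight}(b), via (S1)$\Rightarrow$(S3), shows that $(\iota_{\tilde\CE^*})_\#:(\tilde\CE^\rd)_\#\to\tilde\CE$ is a completely isometric complete order isomorphism. This gives the identification $\tilde\CE=(\tilde\CE^\rd)_\#$ asserted in the corollary. I expect the main obstacle to be only bookkeeping: checking that the hypotheses of the cited results in \cite{JN2} match the present setting (completion, cones, generating property). The substantive work is already contained in Theorem \ref{thm:os-predual-tight}.
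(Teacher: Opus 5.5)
Your proposal is correct and follows the paper's own argument. The paper likewise takes the complete isometry of $\overline{\iota_{\tilde\CE^*}^*}:\tilde\CE^{\rd\rd}\to\tilde\CE^{**}$ from Corollary 6.12(a) and Proposition 6.8(c) of \cite{JN2}, reads it as tightness of $\tilde\CE$, and concludes via Theorem \ref{thm:os-predual-tight}(b); it leaves the generating-cone hypothesis for $\tilde\CE$ implicit, just as your fallback does. If you want to settle that point directly in the matrix-unit model you describe, take a self-adjoint $T\in\CE$ supported in $F\times F$ with $F$ finite and put $e_F:=\sum_{x\in F}e_{x,x}\in\CE$; then $\|T\|e_F\pm T\geq 0$, so $T=P-Q$ with $P,Q\in\CE_+$ and $\|P\|,\|Q\|\leq\|T\|$, and this norm control lets you write any self-adjoint element of $\tilde\CE$ as an absolutely convergent sum $\sum_k T_k$ of self-adjoint $T_k\in\CE$ and split it as $\sum_k P_k-\sum_k Q_k$ with both sums in the closed cone $\tilde\CE_+$.
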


	%\medskip

\end{document}